%

\documentclass[11pt]{amsart}


\usepackage{amssymb,amsfonts,amsthm,amsmath}
\usepackage[english]{babel}
\usepackage[pdftex]{graphicx}
\usepackage[colorlinks=true, pdfstartview=FitV, linkcolor=blue, citecolor=blue, urlcolor=blue]{hyperref}
\usepackage[T1]{fontenc}
\usepackage[utf8x]{inputenc}
\usepackage{fancyhdr}
\usepackage{graphicx}
\usepackage{float}
\usepackage{color}
\usepackage{subfigure}
\usepackage{xspace}
\usepackage{geometry}
\usepackage{tikz-cd}
\usepackage[font=small,skip=0pt]{caption}






\newtheorem{theorem}{Theorem}[section]

\newtheorem{coro}[theorem]{Corollary}




\theoremstyle{definition}




\theoremstyle{remark}

\newtheorem{remark}[theorem]{Remark}




\newcommand{\R}{\mathbb{R}}  





\begin{document}


\title[Addition Theorems for $\mathcal{C}^k$ real functions and applications]{Addition Theorems for $\mathcal{C}^k$ real functions and applications in Ordinary Differential Equations}


\author[F. Crespo]{Francisco Crespo}
\address{GISDA, Departamento de Matem\'atica, Facultad de Ciencias, Universidad del B\'\i o-B\'\i o, Av. Collao 1202, Casilla 5-C. Concepci\'on,  Chile}
\email{fcrespo@ubiobio.cl}

\author[S. Rebollo-Perdomo]{Salom\'on Rebollo-Perdomo}
\address{GISDA, Departamento de Matem\'atica, Facultad de Ciencias, Universidad del B\'\i o-B\'\i o, Av. Collao 1202, Casilla 5-C. Concepci\'on,  Chile}
\email{srebollo@ubiobio.cl}

\author[J.L. Zapata]{Jorge L. Zapata}
\address{GISDA, Departamento de Matem\'atica, Facultad de Ciencias, Universidad del B\'\i o-B\'\i o, Av. Collao 1202, Casilla 5-C. Concepci\'on,  Chile}
\email{jzapata@ubiobio.cl}


\begin{abstract}
This work establishes the existence of addition theorems and double-angle formulas for $\mathcal{C}^k$ real scalar functions. Moreover, we determine necessary and sufficient conditions for a bivariate function to be an addition formula for a $\mathcal{C}^k$ real function. The double-angle formulas allow us to generate a duplication algorithm, which can be used as an alternative to the classical numerical methods to obtain an  approximation for the solution of an ordinary differential equation.  We demonstrate that this algorithm converges uniformly in any compact  domain contained in the maximal domain of that solution. Finally, we carry out some numerical simulations showing a good performance of the duplication algorithm when compared with standard numerical methods.
\\
\\
2020 Mathematics Subject Classification: 26A30, 39B22, 41A10, 65L05.  

\end{abstract}





\maketitle

\section{Introduction}
We recall that a real function $\phi(t)$  has an addition theorem  if  $\phi(t+\tau)$ may be recovered from $\phi(t)$ and $\phi(\tau)$ through the following relation
\begin{equation}
\label{eq:AdditionTheoremFormula}
G(\phi(t),\phi(\tau),\phi(t+\tau))=0,
\end{equation}
wherever the domains of $G(x,y,z)$ and $\phi(t)$ are defined. 
Hereinafter, the function $G(x,y,z)$ defining the addition theorem will be dubbed as the addition formula associated to the function $\phi(t)$. It is well known that trigonometric or exponential functions are examples of functions endowed with these features. For instance, the exponential function $\phi(t)=e^{at}$, with 
$a \in \R\setminus\{0\}$, has the addition formula
$$
G(x,y,z)=z-xy=0,
$$ 
because
$e^{a(t+\tau)}-e^{at}e^{a\tau}=0$  for all $t,\tau \in \R$. 

Traditionally, the theory of elliptic functions investigates the existence of addition formulas \cite{Hancock,MontelPaul,PaulPainleve,Ritt}. Indeed, quoting Weierstrass \cite{Weierstrass}, we have that: \emph{"the problem of the theory of elliptic functions is to determine all functions of the complex argument for which there exists an algebraic addition theorem''}. In this regard, the following statement is proved in \cite{Phragmen1885}. Every single-valued analytic function with an algebraic addition theorem is either an elliptic function or an algebraic function of $x$ or $e^{ax}$. 

Equation \eqref{eq:AdditionTheoremFormula} can be denominated as a scalar addition theorem because it depends only on one function. However, this equation can be generalized and consider an equation of the form
$$
\mathcal{G}(\phi_1(t),\phi_2(\tau),\phi_3(t+\tau))=0,
$$
where 
$\phi_i(t)\in\mathbb{R}^N$ for $i=1,2,3$, which are called vector addition theorems \cite{Bukhshtaber1993}.
In \cite{Bukhshtaber1993,Bukhshtaber1996} the authors related these kind of functional equations with the theory of one-dimensional integrable systems.  Although in  this paper we will restrict to the case of scalar addition theorems \eqref{eq:AdditionTheoremFormula} we consider this investigation as the basis for the realization of a project that includes the case of vector addition theorems from the point of view of differential equations, which as we will show in this work could provide different perspectives for applications.

In the theory of addition theorems we distinguish between direct and inverse problems. More precisely, the problem of finding an addition theorem for a given function $\phi(t)$ will be called the \emph{direct problem of addition theorems}. Conversely, one may also rise the question of whether a given function $G$ could be an addition formula, which is referred to as \emph{inverse problem of addition theorems}.

Although we have considered in \eqref{eq:AdditionTheoremFormula} the general definition of an addition formula, we will focus throughout this paper on \emph{explicit} addition theorems or formulas. That is to say, we call a bivariate function $R(x,y)$ an \emph{explicit addition formula} for $\phi(t)$ when relation \eqref{eq:AdditionTheoremFormula} may be expressed as
\begin{equation}
\label{eq:RelationAdditionExplicit}
\phi(t+\tau)=R\left(\phi(t),\phi(\tau)\right),
\end{equation}
wherever the domains of the involved functions $R(x,y)$ and $\phi(t)$ are defined. Therefore, we are interested in the direct and inverse problems for explicit addition formulas.

The interest in the solutions and properties of explicit addition theorems is a classical problem. For instance, by fixing $R(x,y)$ as a polynomial, rational or algebraic type, the existence and uniqueness of solutions for  equations of the form \eqref{eq:RelationAdditionExplicit} are discussed in \cite[Sec. 2.2.4]{Aczel1967}, and references therein, where a treatment of addition theorems is widely carried out from the point of view of functional equations.

The existence of addition theorems and double-angle formulas, which are the particular case of considering $t=\tau$ in \eqref{eq:AdditionTheoremFormula} or \eqref{eq:RelationAdditionExplicit}, is of great practical interest. For instance, they are used in the construction of algebraic  first integrals for differential equations. This fact is pointed out by Tsiganov \cite{Tsiganov2009}, where the author recalls that the integrals of the Kepler system, given by the  eccentricity and  the angular momentum vectors, are particular cases of this procedure. 
Moreover, before the age of computers, double-angle formulas were traditionally  used to compute tables of trigonometric,  exponential or logarithm functions.  Still today, this strategy is widely used in the fast and efficient computation of elliptic functions and integrals, see for example \cite{Bulirsch1965,Bulirsch1969,Carlson1979,Fukushima2015} and the references therein. Since these works relies on the use of double-angle formulas,  their applicability  could be extended to any function having a double-angle formula.

In this paper we deal with the problem of finding addition and double-angle theorems for $\mathcal{C}^k$ real functions, with $k\in \mathbb{N}\cup \{\infty,\omega\}$, where $\mathcal{C}^{\omega}$ stands for analytic functions. In the applications we pay special attention to functions coming from solutions of autonomous ordinary differential equations.  This problem was  previously explored in \cite{ThesisZapata}.  In  \cite{AbrahamUngar1983} the case of linear homogeneous ordinary differential equations with constant coefficients was analyzed, and also in \cite{AbrahamUngar1987}, where the author places the binomial theorem and the addition theorems for exponential, trigonometric, and hyperbolic functions in the context of a single addition theorem generated by an initial value problem. Our approach allows to consider any autonomous  ordinary differential equations of class  $\mathcal{C}^k$.

As an application of the double-angle formulas, we propose an alternative numerical approximation of the solution of an initial value problem based on the duplication algorithm. This scheme requires the double-angle formula of the solution to be available, which restricts the applicability of the numerical method. Nevertheless, we will use the Taylor expansion of double-angle formulas associated to the solution of an initial value problem. Therefore, extending the applicability of the duplication algorithm and allowing to generate a polygonal approximation of the named solutions. In addition, we also establish the convergence of the proposed duplication algorithm.

This paper is organized as follows. In Section~\ref{sec:FundamentalTheory}, we deal with results concerning the direct and inverse theory of addition theorems. Section~\ref{sec:Computation} contains the Taylor expansion for double-angle formulas associated to the solution of an initial value problem, which allows to define a duplication algorithm in Section~\ref{Algorithm}. Finally, in Section~\ref{sec:Simulations} we carry out numerical experiments comparing the duplication algorithm with the standard numerical methods provided by  \emph{Wolfram Mathematica}.


\section{On the Theory of Addition Theorems}
\label{sec:FundamentalTheory}
This section presents some results about addition theorems as existence or necessary and sufficient conditions for a function $R(x,y)$ to be an explicit addition formula.

\subsection{Direct Problem of Addition Theorems}
\label{sec:DirectInverse}
The following theorem gives an affirmative answer to the question of whether exists addition theorems for a given function. This problem was tackled in  \cite{Aczel1967}, see page 256, as a functional equation of the form \eqref{eq:RelationAdditionExplicit}. Here we specialized  to the case of $\mathcal{C}^k$ functions, which allows for a shortened proof.

Previous to the existence theorem, we need some auxiliary notation and results. Let $I\subset \mathbb{R}$ is an open interval.
If $\phi: I \longrightarrow \mathbb{R}$ is a $\mathcal{C}^k$  function with non vanishing derivative on $I$, then the inverse function theorem says that $\phi:I\longrightarrow \Delta$,
where $\Delta:=\phi({I})\subset \mathbb{R}$, is a diffeomorphism with a $\mathcal{C}^k$ inverse function $\phi^{-1}:\Delta\longrightarrow {I}$ satisfying that $\phi^{-1}\circ \phi=Id_{{I}}$ and $ \phi\circ \phi^{-1}=Id_\Delta$. If we assume that the set  $\{ ( t,\tau ) \in I \times I \; |\; t +\tau \in I\}$ is non empty,  we can define the non empty set
\begin{equation}
\label{eq:Dominio}
\mathcal{D}_{I}:=\left\{(\xi,\eta)\in\Delta\times\Delta \;|\; \phi^{-1}(\xi)+\phi^{-1}(\eta)\in I \right\} \subset\Delta\times\Delta.
\end{equation}
We note that the function
$$\chi:\Delta\times\Delta\longrightarrow\mathbb{R},\quad (\xi,\eta)\longmapsto \phi^{-1}(\xi)+\phi^{-1}(\eta),$$
is of class $\mathcal{C}^k$ and that $\mathcal{D}_{I}=\chi^{-1}(I)$, which implies that $\mathcal{D}_{I}$ is an open set.
Therefore, in what follows $\mathcal{D}_I$ will be named as the \emph{addition domain} of $\phi(t)$.

Note that  if a given function $\phi(t)$ is endowed with an explicit addition formula $R(x,y)$, the above set $\mathcal{D}_{I}$ is the maximal domain in which $R(x,y)$ can be defined.

\begin{theorem}
\label{theo:AdditionTheorem}
Let $I$ be a real open interval. If $\phi:I \longrightarrow \mathbb{R}$ is a $\mathcal{C}^k$ function with non empty addition domain $\mathcal{D}_{I}$ and with non vanishing derivative on $I$, then $\phi(t)$ is endowed with the explicit addition theorem
$$
R: \mathcal{D}_{I} \longrightarrow \mathbb{R}, 
\quad (x,y) \longmapsto \phi\big(\phi^{-1}(x)+\phi^{-1}(y)\big).  
$$ 
\end{theorem}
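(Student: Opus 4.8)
The plan is to verify directly that the stated map $R$ is well defined, of class $\mathcal{C}^k$, and satisfies the defining relation \eqref{eq:RelationAdditionExplicit} of an explicit addition formula. The whole argument rests on the machinery assembled just before the statement: since $\phi$ has non-vanishing derivative on the interval $I$, the inverse function theorem supplies a $\mathcal{C}^k$ diffeomorphism $\phi:I\longrightarrow\Delta$ together with a $\mathcal{C}^k$ inverse $\phi^{-1}:\Delta\longrightarrow I$ satisfying $\phi^{-1}\circ\phi=Id_I$ and $\phi\circ\phi^{-1}=Id_\Delta$. These two identities are the only analytic facts I will need.

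First I would settle well-definedness and regularity. Observe that $R=\phi\circ\chi$, where $\chi(\xi,\eta)=\phi^{-1}(\xi)+\phi^{-1}(\eta)$ is the $\mathcal{C}^k$ map introduced in \eqref{eq:Dominio}. For every $(x,y)\in\mathcal{D}_I=\chi^{-1}(I)$, the value $\chi(x,y)=\phi^{-1}(x)+\phi^{-1}(y)$ lies in $I$, which is precisely the domain of $\phi$; hence $R(x,y)$ is meaningful, and $R$ is a composition of $\mathcal{C}^k$ maps restricted to the open set $\mathcal{D}_I$, so that $R\in\mathcal{C}^k(\mathcal{D}_I)$. The non-emptiness hypothesis on $\mathcal{D}_I$ guarantees that $R$ has a genuine domain on which to act.

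Next I would check the addition identity itself. Fix any pair $(t,\tau)\in I\times I$ with $t+\tau\in I$; such pairs exist by the standing assumption on the set $\{(t,\tau)\in I\times I\;|\;t+\tau\in I\}$. Put $x=\phi(t)$ and $y=\phi(\tau)$, both of which lie in $\Delta$. Applying $\phi^{-1}\circ\phi=Id_I$ yields $\phi^{-1}(x)=t$ and $\phi^{-1}(y)=\tau$, so that $\phi^{-1}(x)+\phi^{-1}(y)=t+\tau\in I$, which confirms $(x,y)\in\mathcal{D}_I$. Evaluating $R$ at this point gives
\[
R(\phi(t),\phi(\tau))=\phi\big(\phi^{-1}(x)+\phi^{-1}(y)\big)=\phi(t+\tau),
\]
which is exactly relation \eqref{eq:RelationAdditionExplicit}. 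Therefore $R$ is an explicit addition formula for $\phi$.

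I do not expect a serious obstacle: once the inverse function theorem grants a global $\mathcal{C}^k$ inverse, the result is essentially bookkeeping. The only point demanding care is the domain compatibility, namely verifying that feeding the pair $(\phi(t),\phi(\tau))$ into $R$ lands inside $\mathcal{D}_I$ rather than outside it, and this is handled precisely by the condition $t+\tau\in I$ that is built into the definition of $\mathcal{D}_I$. The \emph{global} (rather than merely local) invertibility of $\phi$, which follows from its derivative keeping constant sign on the interval $I$, is what makes the identity $\phi^{-1}\circ\phi=Id_I$ valid on all of $I$ and hence renders the whole construction coherent.
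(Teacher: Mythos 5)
Your proposal is correct and follows essentially the same route as the paper: the paper's proof also realizes $R$ as the composition $\Phi\circ\Psi$ with $\Psi(x,y)=\left(\phi^{-1}(x),\phi^{-1}(y)\right)$ and $\Phi(t,\tau)=\phi(t+\tau)$, which is exactly your factorization $R=\phi\circ\chi$, and then verifies $R(\phi(t),\phi(\tau))=\phi(t+\tau)$ by cancelling $\phi^{-1}\circ\phi$. Your explicit check that $(\phi(t),\phi(\tau))$ lands in $\mathcal{D}_I$ is a slightly more careful rendering of a step the paper states without elaboration, but the argument is the same.
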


\begin{proof}
We define the functions 
$$
\Psi:\mathcal{D}_{I} \longrightarrow I\times I, 
\quad (x,y) \longmapsto \left(\phi^{-1}(x),\phi^{-1}(y)\right)
$$
and
$$
\Phi:\Psi(\mathcal{D}_{I}) \longrightarrow \mathbb{R}, 
\quad (t,\tau) \longmapsto \phi(t+\tau).
$$
Both functions are well-defined because of the hypothesis and they are of class $\mathcal{C}^k$. Hence, we have the following commutative diagram
\begin{center}
\begin{tikzcd}[row sep=5ex, column sep=10ex]
\mathcal{D}_{I} \arrow[rd,swap, "\Phi\circ\Psi"] \arrow[r, "\Psi"] & \Psi(\mathcal{D}_{I}) \arrow[d]\arrow[d, "\Phi"]  \\
& \mathbb{R}.
\end{tikzcd}
\end{center}
Consider the $\mathcal{C}^k$ function 
$R:\mathcal{D}_{I} \longrightarrow \mathbb{R}$ defined as
$${R}(x,y):=(\Phi\circ \Psi)(x,y).$$
Then, for all $t,\tau,t+\tau\in {I}$, we have that $\phi(t),\phi(\tau)\in\mathcal{D}_{I}$. Thus,

$$
\begin{aligned}
{R}(\phi(t),\phi(\tau))&=\Phi\circ \Psi(\phi(t),\phi(\tau)),\\
&=\Phi\big(\phi^{-1}(\phi(t)),\phi^{-1}(\phi(\tau))\big),\\
&=\Phi(t,\tau),\\
&=\phi(t+\tau).\\
\end{aligned}
$$

This proves that $R(x,y)$ is a  $\mathcal{C}^k$  explicit addition theorem for $\phi(t)$. Finally, from the definition of $\Psi$ and $\Phi$ we have
$$
R(x,y)=(\Phi\circ \Psi)(x,y)=\phi\big(\phi^{-1}(x)+\phi^{-1}(y)\big).
$$
The proof is completed.
\end{proof}

\begin{remark}
Note that, for the case $I=(a,b)$ with $a,b>0$ and $a\geq b/2$ the domain $\mathcal{D}_{I}$ is  empty. An  analogous situation occurs for $a,b<0$. Therefore, thinking  in the applications, we will consider the case of $I$ being a neighborhood   of the origin, which guarantees $\mathcal{D}_{I}\neq \emptyset$. 
\end{remark}

\begin{remark}
Theorem \eqref{theo:AdditionTheorem}  gives  the  existence of a $\mathcal{C}^k$ addition formula wherever it  makes sense to add $t,\tau\in I$. In this regard we claim that it is not local but a global result for the case of strictly monotone  functions.
\end{remark}

\subsection{Inverse Problem of Addition Theorems}
\label{sec:Inverse}
Along this section we study necessary and sufficient conditions for a function $R(x,y)$ to be an explicit addition formula for a $\mathcal{C}^k$ function.

\begin{theorem}[Necessary Conditions]
\label{CondNece}
\index{Addition Formula Necessary Conditions}
Given an interval $I$ containing the origin, a real $\mathcal{C}^k$ function $\phi:I\longrightarrow \mathbb{R}$, with non empty addition domain $\mathcal{D}_{I}$ and $\phi(0)=\phi_0$, whose derivative vanishes only in a discrete subset of $I$, and its addition formula $R(x,y):\mathcal{D}_{I}\longrightarrow \mathbb{R}$, the following statements hold:

\begin{enumerate}
\item[(i)] ${R}$ is symmetric.
\item[(ii)] ${R}(\phi_0,y)=y,\;{R}(x,\phi_0)=x.$ 
\item[(iii)] ${R}_x(\phi_0,z)=R_y(z,\phi_0).$
\item[(iv)] ${R}[{R}(x,y),z]={R}[x,{R}(y,z)]$.
\end{enumerate}
\end{theorem}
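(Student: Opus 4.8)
The plan is to derive all four properties directly from the defining functional relation of an explicit addition formula, namely $R(\phi(t),\phi(\tau))=\phi(t+\tau)$ from \eqref{eq:RelationAdditionExplicit}, which holds for every $t,\tau\in I$ with $t+\tau\in I$. The central observation is that, by the very definition \eqref{eq:Dominio} of the addition domain, \emph{every} pair $(x,y)\in\mathcal{D}_{I}$ is of the form $(\phi(t),\phi(\tau))$ with $t=\phi^{-1}(x)$, $\tau=\phi^{-1}(y)$ and $t+\tau\in I$. Hence each identity, once checked for arguments of this shape, holds pointwise on all of $\mathcal{D}_{I}$. The hypothesis that $\phi'$ vanishes only on a discrete subset guarantees that the regular locus, where the local inverse $\phi^{-1}$ is $\mathcal{C}^k$ and $R$ admits the explicit form $R(x,y)=\phi\big(\phi^{-1}(x)+\phi^{-1}(y)\big)$ of Theorem \ref{theo:AdditionTheorem}, is open and dense; this is what will let me differentiate when needed.

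For (i), writing $x=\phi(t)$, $y=\phi(\tau)$ and using commutativity of addition, $R(x,y)=\phi(t+\tau)=\phi(\tau+t)=R(y,x)$, so $R$ is symmetric. For (ii), I set $t=0$ and recall $\phi(0)=\phi_0$: then $R(\phi_0,y)=R(\phi(0),\phi(\tau))=\phi(0+\tau)=\phi(\tau)=y$, and symmetrically $R(x,\phi_0)=x$; note that no limiting argument is needed here, since $(\phi_0,y)\in\mathcal{D}_{I}$ already forces $\phi^{-1}(y)\in I$.

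Property (iii) I would obtain as a corollary of (i) rather than by a direct computation. Differentiating the symmetry identity $R(x,y)=R(y,x)$ with respect to $x$ and applying the chain rule yields $R_x(x,y)=R_y(y,x)$ on the regular locus; specializing to $x=\phi_0$ and $y=z$ gives exactly $R_x(\phi_0,z)=R_y(z,\phi_0)$. Finally, for (iv) I substitute triples of attainable arguments: using \eqref{eq:RelationAdditionExplicit} twice,
$$
R[R(\phi(s),\phi(t)),\phi(u)]=R[\phi(s+t),\phi(u)]=\phi(s+t+u)=R[\phi(s),\phi(t+u)]=R[\phi(s),R(\phi(t),\phi(u))],
$$
so associativity of $R$ simply mirrors associativity of addition on $I$.

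I expect the only genuine difficulty to be bookkeeping at the discrete critical set of $\phi'$: there the local inverse $\phi^{-1}$, and hence the partial derivatives $R_x,R_y$, may degenerate, so the differentiated identity in (iii) is first established on the dense open regular locus and then propagated by continuity of $R\in\mathcal{C}^k$. One must also fix a consistent branch of $\phi^{-1}$ so that the substitutions $x=\phi(t)$ are unambiguous on each component of the regular set; the discreteness of the critical set is precisely what makes such a branch available and what allows the resulting identities to extend across the exceptional values.
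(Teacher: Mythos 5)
Your treatment of (i), (ii) and (iv) coincides with the paper's: evaluate the defining relation $\phi(t+\tau)=R(\phi(t),\phi(\tau))$ with one argument set to $0$, and on triples, and let commutativity and associativity of addition on $I$ do the work; like the paper, you correctly note that every point of $\mathcal{D}_{I}$ is attainable as $(\phi(t),\phi(\tau))$ with $t+\tau\in I$, so the identities hold on all of $\mathcal{D}_{I}$. The genuine divergence is in (iii). The paper differentiates the addition relation separately in $t$ and in $\tau$, obtaining $\dot\phi(t+\tau)=R_x(\phi(t),\phi(\tau))\dot\phi(t)$ and $\dot\phi(t+\tau)=R_y(\phi(t),\phi(\tau))\dot\phi(\tau)$, evaluates at $t=0$ and $\tau=0$ respectively, and cancels the common factor $\dot\phi(0)$; this implicitly needs $\dot\phi(0)\neq 0$ (slightly delicate under the stated hypothesis that $\phi'$ only vanishes on a discrete set), but it has the side benefit of producing the identities \eqref{eq:Parcial-1}--\eqref{eq:Parcial-2} that feed directly into Corollary~\ref{Corolario-EDO}. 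You instead differentiate the already-established symmetry $R(x,y)=R(y,x)$ in the first variable on the open set $\mathcal{D}_{I}$, getting $R_x(x,y)=R_y(y,x)$ everywhere and then specializing $x=\phi_0$, $y=z$. This is cleaner and slightly stronger: it uses only the differentiability of $R$ (which statement (iii) presupposes anyway), avoids any division by $\dot\phi(0)$, and in fact makes your closing worries about the critical set of $\phi'$ and branches of $\phi^{-1}$ unnecessary for this step, since the symmetry identity already holds on all of the open set $\mathcal{D}_{I}$ without ever invoking the explicit representation $R(x,y)=\phi\big(\phi^{-1}(x)+\phi^{-1}(y)\big)$ from Theorem~\ref{theo:AdditionTheorem}.
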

\begin{proof}
By hypothesis we have the explicit 
addition theorem \eqref{eq:RelationAdditionExplicit}, that is,
$$
\phi(t+\tau)=R(\phi(t),\phi(\tau)).
$$
Hence, the symmetry of $R(x,y)$ is obtained from the fact that  $\phi(t+\tau)=\phi(\tau+t).$ Additionally,  for all $t,\tau \in  I$ we have
$$
\phi(t+0)={R}(\phi(t),\phi_0) 
\quad
\mbox{and}
\quad
\phi(0+\tau)=R(\phi_0,\phi(\tau)).
$$
Thus, statement (ii) follows by using $\phi(t)=\phi(t+0)=x$ 
and $\phi(\tau)=\phi(0+\tau)=y$.  
 
We now differentiate \eqref{eq:RelationAdditionExplicit} with respect to $t$ and $\tau$ to obtain 
\begin{equation}
\label{eq:Parcial-1}
\dot{\phi}(t+\tau)={R}_x(\phi(t),\phi(\tau))\dot{\phi}(t)
\end{equation}
and
\begin{equation}
\label{eq:Parcial-2}
 \dot{\phi}(t+\tau)={R}_y(\phi(t),\phi(\tau))\dot{\phi}(\tau),
\end{equation}
respectively.  Equation \eqref{eq:Parcial-1} at $t=0$ yields
$$
\dot{\phi}(\tau)={R}_x(\phi_0,\phi(\tau))\dot{\phi}(0)
$$
and equation \eqref{eq:Parcial-2} with $\tau=0$ becomes
$$
\dot{\phi}(t)={R}_y(\phi(t),\phi_0)\dot{\phi}(0).
$$
From these two previous equations, and by considering $t=\tau$ in the last one, we obtain
$$
{R}_x(\phi_0,\phi(\tau))=R_y(\phi(\tau),\phi_0)
$$ 
Therefore, by putting $z=\phi(\tau)$ we get statement (iii).

Finally, statement (iv) is already given in \cite{Aczel1967}. We take $t_1,t_2,t_3\in I$ and  such that $x=\phi(t_1)$, $y=\phi(t_2)$ and $z=\phi(t_3)$  are in $\mathcal{D}_{I}$. Then, the statement follows from the following equalities
\begin{eqnarray*}
\phi(t_1+(t_2+t_3))&=&{R}(\phi(t_1),\phi(t_2+t_3))={R}\left[\phi(t_1),{R}(\phi(t_2),\phi(t_3))\right]
\end{eqnarray*}
and
\begin{eqnarray*}
\phi((t_1+t_2)+t_3)&=&{R}(\phi(t_1+t_2),\phi(t_3))={R}\left[{R}(\phi(t_1),\phi(t_2)),\phi(t_3)\right].
\end{eqnarray*}
\end{proof}

In \cite{Kuwagaki1953}, functions $R(x,y)$ satisfying condition (iv) of previous theorem are analyzed, showing that they must also satisfy condition (i). Moreover, if such a function satisfies that $R(0,0)=0$, we have that it is given by one of the following possibilities
$$R(x,y)=x,\quad R(x,y)=y,\quad R(x,y)=x+y+xy S_1(x,y),\quad R(x,y)=xy S_2(x,y),$$
where $S_1$ and $S_2$ are symmetric functions.

Next result follows directly from Theorem~\ref{CondNece}, particularly from  \eqref{eq:Parcial-1} and  \eqref{eq:Parcial-2}. It says that a differentiable function $\phi(t)$ having an explicit addition formula is also the solution of a particular autonomous first order ordinary differential equation.

\begin{coro}
\label{Corolario-EDO}
Let $I\subset \mathbb{R}$ be an open interval containing 
the origin and let $\phi:I\longrightarrow \mathbb{R}$ be a differentiable function with non empty addition domain $\mathcal{D}_{I}$ and $\phi(0)=\phi_0$.
If $\phi(t)$ has a explicit addition formula $R(x,y)$ of class  
$\mathcal{C}^k$, $k\geq 2$, defined in $\mathcal{D}_{I}$, then it is verified that
\begin{equation}
\label{eq:PDE}
{R}_{x}(\phi(t),\phi(\tau))\dot\phi(t)={R}_{y}(\phi(t),\phi(\tau))\dot\phi(\tau).
\end{equation}
Moreover, $\phi(t)$ is the solution of  the following initial value problem
$$
\dot{\phi}=\,R_x(\phi_0,\phi)\dot{\phi}(0)=R_y(\phi,\phi_0)\dot{\phi}(0), \quad \phi(0)=\phi_0.
$$
\end{coro}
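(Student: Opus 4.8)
The plan is to obtain both assertions as immediate consequences of the two differentiated identities \eqref{eq:Parcial-1} and \eqref{eq:Parcial-2} established within the proof of Theorem~\ref{CondNece}. Recall that, under the present hypotheses ($\phi$ differentiable and $R\in\mathcal{C}^k$ with $k\ge 2$, so that the chain rule is legitimate and $R_x,R_y$ are continuous on $\mathcal{D}_{I}$), differentiating the explicit addition theorem $\phi(t+\tau)=R(\phi(t),\phi(\tau))$ with respect to $t$ and with respect to $\tau$ gives $\dot\phi(t+\tau)=R_x(\phi(t),\phi(\tau))\dot\phi(t)$ and $\dot\phi(t+\tau)=R_y(\phi(t),\phi(\tau))\dot\phi(\tau)$, valid for every pair with $(\phi(t),\phi(\tau))\in\mathcal{D}_{I}$, equivalently $t+\tau\in I$.

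First I would prove \eqref{eq:PDE}. Since the left-hand sides of \eqref{eq:Parcial-1} and \eqref{eq:Parcial-2} coincide, both being $\dot\phi(t+\tau)$, equating their right-hand sides yields $R_x(\phi(t),\phi(\tau))\dot\phi(t)=R_y(\phi(t),\phi(\tau))\dot\phi(\tau)$, which is exactly \eqref{eq:PDE}; no further computation is needed beyond checking that the evaluation point $(\phi(t),\phi(\tau))$ lies in $\mathcal{D}_{I}$.

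Next I would derive the initial value problem by specializing to the origin. Putting $t=0$ in \eqref{eq:Parcial-1} gives $\dot\phi(\tau)=R_x(\phi_0,\phi(\tau))\dot\phi(0)$, and putting $\tau=0$ in \eqref{eq:Parcial-2} gives $\dot\phi(t)=R_y(\phi(t),\phi_0)\dot\phi(0)$. Writing $\phi$ for $\phi(t)$ and relabeling the variable, each identity exhibits $\dot\phi$ as a function of $\phi$ alone, so $\phi$ satisfies the stated autonomous equation together with the prescribed datum $\phi(0)=\phi_0$. The only point that deserves a word is that the two displayed right-hand sides agree, i.e.\ $R_x(\phi_0,\phi)\dot\phi(0)=R_y(\phi,\phi_0)\dot\phi(0)$: this is transparent because both equal $\dot\phi$ after relabeling, and it is moreover consistent with statement (iii) of Theorem~\ref{CondNece}. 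Since the whole argument is a repackaging of already-proven identities, I anticipate no genuine obstacle; the most delicate (yet still routine) part is the bookkeeping of the addition domain $\mathcal{D}_{I}$, ensuring that all partial derivatives are evaluated at admissible points.
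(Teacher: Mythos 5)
Your proposal is correct and follows essentially the same route as the paper, which offers no separate proof but states that the corollary ``follows directly from Theorem~\ref{CondNece}, particularly from \eqref{eq:Parcial-1} and \eqref{eq:Parcial-2}'': equating the two differentiated identities gives \eqref{eq:PDE}, and evaluating them at $t=0$ and $\tau=0$ respectively gives the initial value problem. Your added remarks on the domain bookkeeping and on which hypotheses of Theorem~\ref{CondNece} are actually needed are accurate but not essential.
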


\begin{remark}
The function $R(x,y)$ may be found as the solution of the partial differential equation given by \eqref{eq:PDE} with boundary conditions ${R}(x,x_0)=x,\:{R}(x_0,y)=y,$ where we have used $x=\phi(t),$ $x_0=\phi_0$ and $y=\phi(\tau)$.
\end{remark}

\begin{theorem}[Sufficient Conditions\index{Sufficient Conditions}]
\label{CondiSufi}
Let ${R}(x,y)$ be a $\mathcal{C}^k$,  $k\geq 2$, symmetric function satisfying that there exists a scalar $x_0\in\mathbb{R}$ such that 
\begin{enumerate}
\item[(i)] ${R}(x_0,y)=y,\;{R}(x,x_0)=x.$
\item[(ii)] ${R}_x(x_0,z)=R_y(z,x_0)=f(z;x_0).$
\item[(iii)] ${R}_x(a,b)\, {R}_x(x_0,a)={R}_x(x_0,{R}(a,b))
\quad \mbox{and} \quad
{R}_y(a,b)\, {R}_y(b,x_0)={R}_y({R}(a,b),x_0).$
\end{enumerate}
Then, $R(x,y)$ is the explicit addition formula for the solution of the initial value problem
\begin{equation}
\label{edo1}
\dot{x}=\,f(x;x_0),\quad x(0)=x_0.
\end{equation}
\end{theorem}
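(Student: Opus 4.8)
The plan is to produce the claimed function as the solution of \eqref{edo1} and then verify the addition relation by an ODE--uniqueness argument. First I would let $\phi(t)$ denote the unique maximal solution of \eqref{edo1}, defined on an open interval $I$ containing $0$ with $\phi(0)=x_0$. Existence and uniqueness are guaranteed because the hypothesis $R\in\mathcal{C}^k$ with $k\geq 2$ forces $R_x\in\mathcal{C}^{k-1}$, so by condition (ii) the vector field $f(z;x_0)=R_x(x_0,z)$ is at least $\mathcal{C}^1$ and hence locally Lipschitz, so the Picard--Lindel\"of theorem applies.

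The core idea is to fix $\tau\in I$ and compare, on the set where $t+\tau\in I$, the two functions of $t$ given by $u(t):=\phi(t+\tau)$ and $v(t):=R(\phi(t),\phi(\tau))$. I would first check that they agree at $t=0$: trivially $u(0)=\phi(\tau)$, while condition (i) gives $v(0)=R(x_0,\phi(\tau))=\phi(\tau)$, so $u(0)=v(0)$. For $u$ the defining ODE is immediate, $\dot u(t)=\dot\phi(t+\tau)=f(\phi(t+\tau);x_0)=f(u(t);x_0)$.

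The crux is to show that $v$ solves the \emph{same} autonomous equation $\dot w=f(w;x_0)$. The chain rule together with condition (ii), written with $a=\phi(t)$ and $b=\phi(\tau)$, gives
$$
\dot v(t)=R_x(\phi(t),\phi(\tau))\,\dot\phi(t)=R_x(a,b)\,f(a;x_0)=R_x(a,b)\,R_x(x_0,a).
$$
By the first identity in condition (iii) the right-hand side equals $R_x(x_0,R(a,b))$, and a final application of condition (ii) identifies this with $f(R(a,b);x_0)=f(v(t);x_0)$. Thus $\dot v=f(v;x_0)$, exactly as for $u$.

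Since $u$ and $v$ satisfy the same ODE with the same Lipschitz right-hand side and the same value at $t=0$, uniqueness forces $u\equiv v$ on the common domain, that is $\phi(t+\tau)=R(\phi(t),\phi(\tau))$, which is the assertion that $R$ is the explicit addition formula for $\phi$. The symmetry of $R$ matches the consistency requirement $\phi(t+\tau)=\phi(\tau+t)$; indeed, via $R_y(p,q)=R_x(q,p)$ the second identity of (iii) is just the mirror of the first, so only one of the two is strictly needed above. I expect the single display to be the main obstacle: the entire argument hinges on recognizing that condition (iii) is tailored precisely to convert $R_x(a,b)R_x(x_0,a)$ into $R_x(x_0,R(a,b))$, after which (ii) closes the loop; everything else reduces to a routine invocation of ODE uniqueness.
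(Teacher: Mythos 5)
Your proof is correct, and it rests on the same underlying mechanism as the paper's: exhibit two solutions of one well-posed initial value problem and invoke Picard--Lindel\"of uniqueness, with condition (iii) doing the essential work of converting $R_x(a,b)\,R_x(x_0,a)$ into $R_x(x_0,R(a,b))$. The decomposition differs, though. The paper fixes $\alpha,\beta$ and compares $\phi(t):=x(t(\alpha+\beta))$ with $\psi(t):=R(x(t\alpha),x(t\beta))$, both solving $\dot w=(\alpha+\beta)R_x(x_0,w)$ with $w(0)=x_0$; this forces it to differentiate in both arguments of $R$ and hence to use condition (ii) together with \emph{both} identities of (iii). You instead freeze $\tau$ and compare $u(t)=\phi(t+\tau)$ with $v(t)=R(\phi(t),\phi(\tau))$, both solving $\dot w=f(w;x_0)$ with the initial value $\phi(\tau)$ taken at the (generally non-equilibrium) point $\phi(\tau)$ rather than at $x_0$. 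This is slightly more economical: only the first identity of (iii) is needed, and your observation that symmetry of $R$ makes the second identity a mirror of the first is accurate. What the paper's symmetric $\alpha,\beta$ parametrization buys is that the comparison is always anchored at the distinguished point $x_0$ where conditions (i)--(iii) are stated, so no uniqueness argument at a varying base point is required; what your version buys is a shorter computation and the explicit identification of which half of hypothesis (iii) is actually doing the work. Both arguments share the same implicit domain caveats (one needs $(\phi(t),\phi(\tau))$ to stay in the domain of $R$ and $t+\tau$ in the interval of definition), which neither you nor the paper makes fully precise, but this does not affect the validity of either proof.
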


\begin{proof}
For notational simplicity, we do not display the dependence of $f(x;x_0)$ on $x_0$. The function $f(x)$ in \eqref{edo1} comes from condition (ii). We will prove the theorem by assuming that $f(x)={R}_x(x_0,x)$. The proof for the case $f(x)={R}_y(x,x_0)$ is completely analogous. 

Let $x(t)$ be the solution of \eqref{edo1}. We define the following  functions
$$
\phi(t):=x(t(\alpha+\beta))
\quad \mbox{and} \quad
\psi(t):={R}(x(t\alpha),x(t\beta)),
$$
which are of class $\mathcal{C}^k$, with $k\geq 1$ and $k\geq 2$, respectively.

We will show that $\phi(t)=\psi(t)$ for all $t\in I$, being $I$ the maximal interval where $x(t)$ is defined. As a consequence, we will obtain that $x(t\alpha+t\beta)={R}(x(t\alpha),x(t\beta))$, and therefore, by setting $\alpha=1$ and $\tau=t\beta$,
$$
x(t+\tau)={R}(x(t),x(\tau)).
$$

By using the above definition of $\phi(t)$, a straightforward computation shows that $\phi(t)$ is the unique solution of the following initial value problem
\begin{equation}
\label{eq.phi}
\dot{\phi}(t)=(\alpha+\beta){R}_x(x_0,\phi(t)),\quad \phi(0)=x(0)=x_0.
\end{equation}

Now, from the definition of $\psi(t)$ we get
\begin{eqnarray*}
\dot{\psi}(t)&=&{R}_x(x(t\alpha),x(t\beta))\,\dot{x}(t\alpha)\,\alpha+{R}_y(x(t\alpha),x(t\beta))\,\dot{x}(t\beta)\,\beta,\\
&=&{R}_x(x(t\alpha),x(t\beta))\,{R}_x(x_0,x(t\alpha))\,\alpha+{R}_y(x(t\alpha),x(t\beta))\,{R}_x(x_0,x(t\beta))\,\beta.
\end{eqnarray*}
Then, by applying condition (ii) we rewrite $\dot{\psi}(t)$ as
$$
\dot{\psi}(t)={R}_x(x(t\alpha),x(t\beta))\,{R}_x(x_0,x(t\alpha))\,\alpha+{R}_y(x(t\alpha),x(t\beta))\,{R}_y(x(t\beta),x_0)\,\beta.
$$
Hence, by using (iii), (ii) and the definition of $\psi(t)$ we get
$$
\dot{\psi}(t)= (\alpha+\beta)\,{R}_x(x_0,{R}(x(t\alpha),x(t\beta))) =(\alpha+\beta)\,{R}_x(x_0,\psi(t)).
$$
Additionally, condition (i) implies that $\psi(0)=R(x_0,x_0)=x_0$  
Therefore, we have proved that $\psi(t)$ satisfies the initial value problem
\begin{equation}
\label{eq.psi}
\dot{\psi}(t)=(\alpha+\beta)\,{R}_x(x_0,\psi(t)), 
\quad 
\psi(0)=x_0.
\end{equation}
From \eqref{eq.phi} and \eqref{eq.psi} is clear that $\phi(t)$ and $\psi(t)$ are the solutions of the same initial value problem. Hence we have $\psi(t)=\phi(t)$. This completes the proof
\end{proof}


\section{Double-angle Formulas and applications.}
\label{sec:Computation}
In this section we will show how the results of previous section can be used to get an algorithm, based on the existence of double-angle formulas, to obtain accurate approximations of solutions of initial value problems in autonomous first order differential equations.

Recall that a double-angle formula for a real function $\phi(t)$ is a function $R(x)$ satisfying the following relation 
$$
\phi(2t)=R(\phi(t)).
$$
 Under some assumptions the existence of a double-angle formula is established, as it showed by the next result, which is a straightforward consequence of Theorem~\ref{theo:AdditionTheorem}.
\begin{coro}
\label{coro:AdditionTheorem}
Let $I$ be a real open interval. If $\phi:I \longrightarrow \mathbb{R}$ is a $\mathcal{C}^k$ function with non empty addition domain $\mathcal{D}_{I}$ and with non vanishing derivative on $I$, then $\phi(t)$ is endowed with a  $\mathcal{C}^k$ double-angle formula.
\end{coro}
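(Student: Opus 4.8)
The plan is to obtain the double-angle formula by restricting to the diagonal $y=x$ the bivariate addition formula already produced by Theorem~\ref{theo:AdditionTheorem}. First I would invoke that theorem, whose hypotheses coincide verbatim with those assumed here, to get the $\mathcal{C}^k$ explicit addition formula $R(x,y)=\phi\big(\phi^{-1}(x)+\phi^{-1}(y)\big)$, defined on $\mathcal{D}_I$ and satisfying $\phi(t+\tau)=R(\phi(t),\phi(\tau))$. Setting $\tau=t$ gives $\phi(2t)=R(\phi(t),\phi(t))$, which motivates defining the candidate double-angle formula $D(x):=R(x,x)=\phi\big(2\phi^{-1}(x)\big)$.

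The set on which $D$ naturally lives is the diagonal slice $\mathcal{E}_I:=\{x\in\Delta \;|\; (x,x)\in\mathcal{D}_I\}=\{x\in\Delta \;|\; 2\phi^{-1}(x)\in I\}$. The one point requiring attention—and the only genuine obstacle—is that $\mathcal{E}_I$ be non-empty, since a priori the non-emptiness of $\mathcal{D}_I$ only furnishes a pair $(t,\tau)$ with $t,\tau,t+\tau\in I$ and not a point lying on the diagonal. I would settle this by a convexity argument: given such a pair, the midpoint $s:=(t+\tau)/2$ lies in $I$ because $I$ is convex and $s$ lies between $t$ and $\tau$, while $2s=t+\tau\in I$ by assumption. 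Hence $x:=\phi(s)\in\mathcal{E}_I$, so $\mathcal{E}_I\neq\emptyset$ and $D$ is defined on a non-empty open set.

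It then remains to verify regularity and the functional identity, both of which are routine. Regularity is immediate: $D$ is the composition of the $\mathcal{C}^k$ map $\phi^{-1}$, multiplication by $2$, and $\phi$ (equivalently, the restriction of the $\mathcal{C}^k$ function $R$ to the analytic diagonal embedding $x\mapsto(x,x)$), so $D$ is $\mathcal{C}^k$. For the identity, for every $t\in I$ with $2t\in I$ one has $\phi(t)\in\mathcal{E}_I$ and
\[
\phi(2t)=\phi(t+t)=R(\phi(t),\phi(t))=D(\phi(t)),
\]
which is exactly the defining relation of a double-angle formula. This completes the argument, the only nontrivial ingredient being the midpoint observation guaranteeing that the double-angle domain $\mathcal{E}_I$ is non-empty.
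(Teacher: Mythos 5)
Your argument is correct and follows exactly the route the paper intends: the corollary is stated there as a ``straightforward consequence'' of Theorem~\ref{theo:AdditionTheorem}, obtained by restricting the addition formula $R(x,y)$ to the diagonal, which is precisely what you do. Your midpoint observation guaranteeing that the diagonal slice of $\mathcal{D}_I$ is non-empty is a detail the paper leaves implicit, and it is a worthwhile addition.
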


In what follows, we will denote both the explicit addition formula and the double-angle formula of a function $\phi(t)$ with the same symbol $R$, and the context will make clear which one are we referring to.

In general, the problem of finding the addition or double-angle formula of a given function is a difficult task. However, the previous results show the relationship between the direct and inverse problems of addition theorems for solutions of autonomous first order ordinary differential equations. More precisely, let $x(t)$ be the solution of the initial value problem 
\begin{equation}\label{eq:pvi}
\dot{x}=f(x),\quad x(0)=x_0.
\end{equation}
If $f(x_0)= 0$, then the constant function $x(t)\equiv x_0$ is the solution to the problem and trivially $R(x,y)=x_0$ is its explicit addition formula. If  $f(x_0)\neq 0$, then
Theorem~\ref{theo:AdditionTheorem} 
ensures the existence of $R(x,y)$ for $x(t)$.  Recall that the exponential or trigonometric functions are solutions of this kind of initial value problems, which have explicit addition formulas. 

On one hand, while the existence of $R(x,y)$ is guaranteed, knowing its concrete expression is not easy and only a few examples are known in the literature. On the other hand, as far as we know, there is no systematic method to compute the expression of $R(x,y)$ for a given function $\phi(t)$ or for the solution $x(t)$ of \eqref{eq:pvi}, with  $f(x)$ an arbitrary function. Nevertheless, this problem could be tackled for the case when $f(x)$ has a sort of simplicity. For instance, if $ n\in \mathbb{N}$, 
then for $f(x)=x^{n+1}$ and $f(x)=x^{1/(n+1)}$ we obtain that 
$$
R(x,y)=\dfrac{x_0 \,x \,y}{\left(x_0^n( x^n +y^n)-x^n y^n\right)^{1/n}}
$$
and
$$
R(x,y)=\left( x^{{n}/{(n+1)}}-x_0^{{n}/{(n+1)}}+y^{{n}/{(n+1)}}\right)^{{(n+1)}/{n}}
$$
are the concrete explicit addition formulas for the solution $x(t)$ of \eqref{eq:pvi}.

Analogously to the case of explicit addition formulas, the existence of a double-angle formula $R(x)$, for the solution $x(t)$ of \eqref{eq:pvi}, is guaranteed by Corollary \ref{coro:AdditionTheorem} and its concrete expression of could be difficult to obtain. Nevertheless, we point out that it is possible take advantage of the
existence of the double-angle formula $R(x)$ to derive a procedure to compute the Taylor polynomial of any degree of $R(x)$ at $x_0$. Such a Taylor  polynomial can be used to produce an algorithm that allow us to provide an approximation of $x(t)$. More precisely, these Taylor polynomials can be used to produce a succession of functions converging uniformly to $x(t)$ in a certain compact domain $D$, such a convergence and an iterative procedure of the Taylor polynomial of enough large degree allow us to give a accurate approximation to $x(t)$ in $D$, which is contained in the maximal interval of the solution $x(t)$.  This mentioned numerical scheme will be dubbed as duplication algorithm which we will explain in the follows.

\subsection{Taylor polynomials of $R(x)$}
Let $x(t)$ be the solution of \eqref{eq:pvi} and $R(x)$ its double-angle formula, that is,
\begin{equation}
\label{FormulaAnguloDoble}
x(2t)=R(x(t)).
\end{equation}
We will describe the procedure to compute the Taylor polynomial of $R(x)$. We consider the non trivial case, that is, $f(x_0)\neq 0$ and by simplicity, we assume that $f(x)$ in \eqref{eq:pvi} is analytic, and so are $x(t)$ and $R(x)$. To obtain the Taylor polynomial of $R(x)$, we compute the successive derivatives of $R(x)$ at $x_0$. 

Firstly, from \eqref{FormulaAnguloDoble} it is clear that 
\begin{equation}
\label{Taylor-Pol-ord-0}
R(x_0)=x_0.
\end{equation}
We now take the derivative of \eqref{FormulaAnguloDoble} with 
respect to $t$:
$$
2\dot{x}(2t)=R'(x(t))\dot{x}(t),
$$
here $R'(\cdot)$ means the derivative of $R(x)$ with respect to $x$. Then, by using \eqref{eq:pvi} in the left- and right-hand sides we get
\begin{equation}
\label{Derivada-ord-1}
2f(x(2t))=R'(x(t))f(x(t)).
\end{equation}
Then, by setting $t=0$ we obtain $2f(x_0)=R'(x_0)f(x_0),$ which, taking into account that $f(x_0)\neq0$, yields
\begin{equation}
\label{Taylor-Pol-ord-1}
R'(x_0)=2.
\end{equation}
For obtaining the second order derivative of $R(x)$, we rewrite equation \eqref{Derivada-ord-1} by using \eqref{FormulaAnguloDoble}, which leads to
$$
2f(R(x(t)))=R'(x(t))f(x(t)).
$$
Then, the derivative of this last expression with respect to $t$ is
$$
2f'(R(x(t)))R'(x(t))\dot{x}(t)=R''(x(t))\dot{x}(t)f(x(t))+R'(x(t))f'(x(t))\dot{x}(t),
$$
which, by using \eqref{eq:pvi} and by setting $t=0$, becomes
\begin{eqnarray}
2f'(x_0)R'(x_0)\,f(x_0)=R''(x_0)(f(x_0))^2+R'(x_0)f'(x_0)f(x_0).\nonumber
\end{eqnarray}
Finally, by simplifying terms and by using \eqref{Taylor-Pol-ord-1} we arrive to
\begin{equation}
\label{Taylor-Pol-ord-2}
R''(x_0)=2 f'(x_0)/f(x_0).
\end{equation}
The third order derivative $R'''(x_0)$ is easily calculated by executing the same procedure
$$
R'''(x_0)=6f''(x_0)/f(x_0).
$$
Hence, by using this last equation and equations \eqref{Taylor-Pol-ord-0}, \eqref{Taylor-Pol-ord-1}, and  \eqref{Taylor-Pol-ord-2}, the 3th order Taylor polynomial of   $R(x)$ at $x_0$ is
$$
x_0+2(x-x_0)+\frac{f'(x_0)}{f(x_0)}(x-x_0)^2+\frac{f''(x_0)}{f(x_0)}(x-x_0)^3.
$$

Following the same methodology, higher order derivatives of $R(x)$ at $x=x_0$ can be computed. Thus, allowing to obtain the Taylor polynomials of arbitrary order for  $f(x)$ analytic and up to order $n\leq k$ for $f(x)$ of class $\mathcal{C}^k$. We include the expression of the derivatives up to the 10th order in the Appendix~\ref{sec:Appendix}.


\subsection{The duplication algorithm}
\label{Algorithm}
In this section, we use double-angle formulas to provide an estimation for the solution $x(t)$ of an initial value problem. 

Our method is based on the use of the double-angle formula $R(x)$ to generate local approximations for the function $x(t)$ in a compact subset $D$ containing $t=0$ and contained in the maximal interval $x(t)$.  The following procedure will be dubbed as the \emph{duplication algorithm}. Specifically, we compute $x(t)$ in a small neighborhood $V_0$ of the origin; for $\hat{t}\in D$ far from the origin, we divide $\hat{t}$ by two as many times as it is needed to get $\hat{t}/2^n\in V_0$, and we compute $x(\hat{t}/2^n)$; then, we can recover  $x(\hat{t})$  by iterating $n$-times the double-angle formula. That is,
\begin{equation}
\label{eq:DuplicationAlgorithm}
x(\hat{t}\,)= R^n\left(x\left(\frac{\hat{t}}{2^n}\right)\right),
\end{equation}
where $R^n$ denotes the $n$-th iteration of  the double-angle formula $R$ associated to $x(t)$. 

The previous process depends on the availability of the double-angle formula $R$, as well as the capacity of computing $x(t)$ in $V_0$. However, when $x(t)$ is unknown, an approximation may be obtained by using a Taylor polynomial 
\begin{equation}
\label{eq:DuplicationAlgorithm2}
x(\hat{t}\,)\approx R^n\left(x_{m_1}\left(\frac{\hat{t}}{2^n}\right)\right),
\end{equation}
being $x_{m_1}(t)$ the $m_1$-th order Taylor polynomial of $x(t)$ at $t=0$. Moreover, for the case in which we are not able to find the exact double-angle formula $R$, we will use its Taylor polynomials. For this purpose, the previous section gave an easy method to compute arbitrary derivatives of $R$ at $x=x_0$. This situation leads us to the following local approximation
\begin{equation}
\label{eq:DuplicationAlgorithm2}
x(\hat{t}\,)\approx R^n_{m_2}\left(x_{m_1}\left(\frac{\hat{t}}{2^n}\right)\right),
\end{equation}
where $R^n_{m_2}$ is the $n$-th iteration of the $m_2$-th order Taylor polynomial of $R$.  In fact, straightforward computations prove that
\begin{equation}
\label{eq:DuplicationAlgorithm3}
x(\hat{t}\,)= R^n_{m_2}\left(x_{m_1}\left(\frac{\hat{t}}{2^n}\right)\right)+O\left(\left(\frac{\hat{t}}{2^n}\right)^{\!\!m}\right),\quad \mbox{with  $m=\min \{m_1+1,m_2+1\}$.}
\end{equation}

Keeping in mind the assumption that $f(x)$ in \eqref{eq:pvi} is analytic, we have that $x(t)$ is analytic in its maximal domain ${I}_{M}=(\alpha,\beta)$. Thus, the associated double-angle formula $R(x)$ is analytic in $\mathcal{D}_{{I}_{M}}=x(\alpha/2,\beta/2)$.  Now we construct a polygonal function defined in a compact interval $D=[a,b]\subset I_M$ to estimate $x(t)$, which is based on local approximations and their interpolation. More precisely, given the $r/2^n$-sized partition $\mathcal{P}(D)=\{ t_0,t_1,\ldots,t_{2^n} \}$, with $t_0=a$, $t_{2^n}=b$ and $r=b-a$ we define for $t\in\left[t_i,t_{i+1}\right]$
\begin{equation}
\label{eq:DefPMN}
P_n^{m_1,m_2}(t)=R_{m_2}^n\left(x_{m_1}({t_i}/{2^n})\right)\dfrac{t-t_{i+1}}{t_i-t_{i+1}}+R_{m_2}^n\left(x_{m_1}({t_{i+1}}/{2^n})\right)\dfrac{t_{i}-t}{t_i-t_{i+1}}.
\end{equation}

\begin{remark}
The duplication algorithm uses a Taylor approximation of $x(t)$ in a small neighborhood of the origin 
$V_0=[-r_0,r_0].$
Note that the partition $\mathcal{P}(D)$ can be always be chosen in such a way that $t_i/2^n\in V_0,$ for any $t_i\in \mathcal{P}(D).$ In what follows we will assume that this condition is satisfied.
\end{remark}

\begin{remark}
The above function $R^n_{m_2}$ admits an alternative definition as the $m_2$-th order Taylor polynomial of the function $R^n$, which can be obtained by combining the successive derivatives given in Appendix~\ref{sec:Appendix} and the formula of Fa\`a di Bruno \cite{Krantz2002}. 
 In that case, the numerical results could be improved. However,  this subject is left for further study.
\end{remark}

In the proof of the next theorem we need the following auxiliary polygonal function
\begin{equation}
\label{eq:DefPMN2}
P_n(t)=x(t_i)\dfrac{t-t_{i+1}}{t_i-t_{i+1}}+x(t_{i+1})\dfrac{t_{i}-t}{t_i-t_{i+1}},\quad \mbox{ for } t\in\left[t_i,t_{i+1}\right].
\end{equation}

\begin{theorem}[Duplication Algorithm Convergence]
The family $\{P_n^{m_1,m_2}\}_{m_1,m_2,n \in \mathbb{N}}$ of polygonal functions converges uniformly to $x(t)$ in any compact interval $D\subset I_M$ as $n\rightarrow\infty$. 
Moreover, we have the following error estimation
$$
\big{|}P_n^{m_1,m_2}(t)-x(t)\big{|}\leq O\left[(r/2^n)^{2}\right] \qquad \forall\, t\in D.
$$
\end{theorem}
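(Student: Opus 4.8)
The plan is to measure $P_n^{m_1,m_2}$ against $x$ through the exact polygonal $P_n$ of \eqref{eq:DefPMN2}, splitting the error by the triangle inequality
$$
\big|P_n^{m_1,m_2}(t)-x(t)\big|\le \big|P_n^{m_1,m_2}(t)-P_n(t)\big|+\big|P_n(t)-x(t)\big|,\qquad t\in D,
$$
and bounding the two terms by different mechanisms: a classical interpolation estimate for the second, and the nodal accuracy \eqref{eq:DuplicationAlgorithm3} for the first.

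For the interpolation term I would use that, since $f$ is analytic, $x(t)$ is analytic on $I_M$ and hence $C^2$ on the compact interval $D$, so that $M_2:=\max_{t\in D}|\ddot x(t)|<\infty$. On each subinterval $[t_i,t_{i+1}]$ of length $r/2^n$ the function $P_n$ is exactly the linear interpolant of $x$ at the two endpoints, so the standard bound for piecewise-linear interpolation gives $|x(t)-P_n(t)|\le \tfrac18 (r/2^n)^2 M_2$ there; taking the maximum over $i$ yields $|P_n(t)-x(t)|\le \tfrac{M_2}{8}(r/2^n)^2=O((r/2^n)^2)$ uniformly on $D$. This is the term that produces the quadratic rate in the statement.

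For the remaining term, note that on each $[t_i,t_{i+1}]$ both $P_n^{m_1,m_2}$ and $P_n$ are linear with the same (convex) interpolation weights, so their difference is the linear interpolant of the nodal discrepancies and therefore
$$
\big|P_n^{m_1,m_2}(t)-P_n(t)\big|\le \max_{t_i\in\mathcal{P}(D)}\big|R_{m_2}^n\big(x_{m_1}(t_i/2^n)\big)-x(t_i)\big|.
$$
Here I would invoke \eqref{eq:DuplicationAlgorithm3}: for each node $\hat t=t_i$ the quantity $R_{m_2}^n(x_{m_1}(t_i/2^n))-x(t_i)$ is $O((t_i/2^n)^m)$ with $m=\min\{m_1+1,m_2+1\}\ge 2$. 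Since the nodes lie in the compact set $D=[a,b]$, one has $|t_i|\le\max\{|a|,|b|\}$, whence $(t_i/2^n)^m=O((1/2^n)^m)\le O((1/2^n)^2)=O((r/2^n)^2)$. Combining the two bounds proves both the uniform convergence and the stated error estimate.

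The delicate point, which I expect to be the real obstacle, is the uniform-in-$n$ validity of the nodal estimate \eqref{eq:DuplicationAlgorithm3} over the whole partition. The iteration is expanding: since $R'(x_0)=2$, the $n$-fold composition satisfies $(R^n)'(x_0)\approx 2^n$, so any error committed in the seed value $x_{m_1}(t_i/2^n)$ and any error from replacing $R$ by a Taylor polynomial is amplified by a factor of order $2^n$ under iteration. One must show that this amplification is defeated by the smallness gained from seeding at distance $O(t_i/2^n)$ from the fixed point $x_0$; concretely, I would track the orbit $R_{m_2}^{\,j}(x_{m_1}(t_i/2^n))\approx x(t_i/2^{\,n-j})$ and balance the per-step truncation error against the product of the intermediate derivatives of $R$. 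Getting this balance to hold uniformly in $n$ is precisely where the interpretation of $R_{m_2}^n$—the iterate of the truncation versus the truncation of the iterate noted in the Remark—becomes relevant, and it is the step I would scrutinize most carefully.
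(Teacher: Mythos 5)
Your argument is, in structure, exactly the paper's proof: split the error through the exact polygonal $P_n$ of \eqref{eq:DefPMN2}, control $P_n^{m_1,m_2}-P_n$ by the nodal estimate \eqref{eq:DuplicationAlgorithm3}, and control $P_n-x$ by an interpolation estimate of order $(r/2^n)^2$. The one place you genuinely diverge is the interpolation term: the paper rewrites $P_n(t)-x(t)$ as the product $\left(\frac{\Delta x_i}{\Delta t_i}+\frac{x(t_{i+1})-x(t)}{t-t_{i+1}}\right)(t_{i+1}-t)$ and argues that each factor is $O(r/2^n)$, while you quote the standard piecewise-linear interpolation bound $\frac{1}{8}(r/2^n)^2\max_{D}|\ddot x|$. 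These are the same estimate; yours is shorter and carries an explicit constant. Likewise, your observation that the convex weights reduce $|P_n^{m_1,m_2}(t)-P_n(t)|$ to the maximal nodal discrepancy is a tidier packaging of the paper's term-by-term bound.

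The point you single out for scrutiny --- the uniform-in-$n$ validity of \eqref{eq:DuplicationAlgorithm3} --- is exactly where the paper is weakest: that formula is introduced with ``straightforward computations prove that'' and no proof is supplied, there or in the proof of the theorem, so your proposal is no less complete than the paper's own argument. Your suspicion is moreover quantitatively justified. If $R^n_{m_2}$ means the $n$-th iterate of the truncated map (which is how \eqref{eq:DefPMN} uses it), then writing $e_j$ for the error after $j$ steps one gets $e_{j+1}\approx R'(\cdot)\,e_j+O\big((\hat t/2^{n-j})^{m_2+1}\big)$, the last term being the Taylor remainder of $R_{m_2}$ evaluated at distance $O(\hat t/2^{n-j})$ from $x_0$; with $R'\approx 2$ near $x_0$ this unrolls to $e_n=O\big(2^n(\hat t/2^n)^{m_1+1}\big)+O\big(\sum_{k=1}^{n}2^{k-1}(\hat t/2^k)^{m_2+1}\big)$, and the sum is dominated by its $k=1$ term, the truncation error of the \emph{final} duplication, of size $O\big((\hat t/2)^{m_2+1}\big)$ --- an $n$-independent floor. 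So \eqref{eq:DuplicationAlgorithm3}, and with it the theorem's claim that the error is $O\left[(r/2^n)^{2}\right]$ as $n\to\infty$ for fixed $m_2$, does not follow without either reinterpreting $R^n_{m_2}$ as the $m_2$-th Taylor polynomial of $R^n$ (the alternative the paper itself mentions in a remark but defers) or adding a term of order $r^{m_2+1}$ to the error bound. This is a gap in the paper, not one your proposal introduces; identifying it is the right instinct.
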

\begin{proof}
Let us consider  $t\in\left[t_i,t_{i+1}\right]\subset D$. Then by the triangle inequality we have
\begin{equation}
\label{eq:Descomposicion}
\big{|}P_n^{m_1,m_2}(t)-x(t)\big{|}\leq\big{|}P_n^{m_1,m_2}(t)-P_n(t)\big{|}+\big{|}P_n(t)-x(t)\big{|}.
\end{equation}
where $P_n(t)$ is defined in \eqref{eq:DefPMN2}. By using \eqref{eq:DefPMN}, we get 
$$
\begin{aligned}
\label{eq:PrimerSumando}
\big{|}P_n^{m_1,m_2}(t)-P_n(t)\big{|}=&\left|\left(R^n_{m_2}(x_{m_1}(t_i/2^n))-x(t_i))\right)\dfrac{t-t_{i+1}}{t_i-t_{i+1}}\right.\\
&\left.+\left(R^n_{m_2}(x_{m_1}(t_{i+1}/2^n))-x(t_{i+1}))\right)\dfrac{t_{i}-t}{t_i-t_{i+1}}\right|.
\end{aligned}
$$
Hence, from \eqref{eq:DuplicationAlgorithm3} it follows that
$$
\begin{aligned}
\label{eq:PrimerSumando4}
\big{|}P_n^{m_1,m_2}(t)-P_n(t)\big{|}=&\left|O\left[(t_i/2^n)^{m}\right]\dfrac{t-t_{i+1}}{t_i-t_{i+1}}+O\left[(t_{i+1}/2^n)^{m}\right]\dfrac{t_{i}-t}{t_i-t_{i+1}}\right|
\end{aligned}
$$
and we obtain the following upper bound by considering that $|t_j|\leq r$ and that $m\geq 2$,
$$
\begin{aligned}
\label{eq:PrimerSumando5}
\big{|}P_n^{m_1,m_2}(t)-P_n(t)\big{|}\leq&\left|O\left[(t_i/2^n)^{m}\right]\dfrac{t-t_{i+1}}{t_i-t_{i+1}}\right|+\left|O\left[(t_{i+1}/2^n)^{m}\right]\dfrac{t_{i}-t}{t_i-t_{i+1}}\right|\\
\leq&\left|O\left[(r/2^n)^{m}\right]\dfrac{t-t_{i+1}}{t_i-t_{i+1}}\right|+\left|O\left[(r/2^n)^{m}\right]\dfrac{t_{i}-t}{t_i-t_{i+1}}\right|\\
\leq&\,O\left[(r/2^n)^{m}\right] \leq\,O\left[(r/2^n)^{2}\right].
\end{aligned}
$$

To finish the proof, we look for an upper bound of the second term in \eqref{eq:Descomposicion}
$$
\begin{aligned}
\big{|}P_n(t)-x(t)\big{|}=&\left|x(t_i)\dfrac{t-t_{i+1}}{t_i-t_{i+1}}+x(t_{i+1})\dfrac{t_{i}-t}{t_i-t_{i+1}}-x(t)\right|\\
=&\left|t\dfrac{x(t_i)-x(t_{i+1})}{t_i-t_{i+1}}+\dfrac{t_i}{t_i-t_{i+1}}x(t_{i+1}) -\dfrac{t_{i+1}}{t_i-t_{i+1}}x(t_{i})-x(t)\right|\\
=&\left|t\dfrac{\Delta x_i}{\Delta t_i}+\dfrac{t_i}{\Delta t_i}x(t_{i+1}) -\dfrac{t_{i+1}}{\Delta t_i}x(t_{i})-x(t)\right|,
\end{aligned}
$$
where $\Delta x_i:=x(t_i)-x(t_{i+1})$ and $\Delta t_i:=t_i-t_{i+1}$. 
If now add and subtract $\dfrac{t_{i+1}}{\Delta t_i}x(t_{i+1})$, then we get
$$
\begin{aligned}
\big{|}P_n(t)-x(t)\big{|}=&\left|t\dfrac{\Delta x_i}{\Delta t_i}+\dfrac{t_i-t_{i+1}}{\Delta t_i}x(t_{i+1}) +\dfrac{t_{i+1}}{\Delta t_i}(x(t_{i+1})-x(t_i))-x(t)\right|\\
=&\left|(t-t_{i+1})\dfrac{\Delta x_i}{\Delta t_i}+x(t_{i+1})-x(t)\right|\\
=&\left|\dfrac{\Delta x_i}{\Delta t_i}+\dfrac{x(t_{i+1})-x(t)}{t-t_{i+1}}\right|(t_{i+1}-t)\\
=&\,O\left[(r/2^n)^{2}\right].
\end{aligned}
$$
The last equality follows from the fact that 
both factors in the previous step satisfy that 
$$\left|\dfrac{\Delta x_i}{\Delta t_i}+\dfrac{x(t_{i+1})-x(t)}{t-t_{i+1}}\right|=O\left[(r/2^n)\right],\quad (t_{i+1}-t)=O\left[(r/2^n)\right].$$
The proof has been completed.
\end{proof}

In previous proof is stated that 
$$\big{|}P_n^{m_1,m_2}(t)-x(t)\big{|}\leq\big{|}P_n^{m_1,m_2}(t)-P_n(t)\big{|}+\big{|}P_n(t)-x(t)\big{|}.
$$
We have showed that $\big{|}P_n^{m_1,m_2}(t)-P_n(t)\big{|}$ depends on the minimum of the orders of the Taylor polynomials involved and $\big{|}P_n(t)-x(t)\big{|}$ depends on the size of the partition $\mathcal{P}(D)$. The numerical experiments carried out in next section will show such dependence.

\section{Numerical Simulations}
\label{sec:Simulations}
In this section we analyze the efficiency and accuracy of the duplication algorithm as an approximation method for the solution of ordinary differential equations. We do so by comparing it with standard numerical methods. More precisely, our experiments have been carried out by using the software \emph{Wolfram Mathematica}, version 12.1.1.0 Linux x86 (64-bit). This software is running on the platform {\color{black} Intel® Core™ i3-9100 CPU, 3.60GHz$\times$4}.

In all our simulations we consider the following notation:
\begin{itemize}
\setlength{\itemsep}{0.15cm}
\item $x(t)$ is the exact solution of the initial value problem \eqref{eq:pvi}. 
\item $\hat{x}(t)$ is the approximate solution to \eqref{eq:pvi} given by the duplication algorithm having the exact double-angle formula. 
\item $\tilde{x}(t)$ is the approximate solution to \eqref{eq:pvi} by using the duplication algorithm with the Taylor approximation of the double-angle formula.
\item $\check{x}(t)$ is the numeric approximation to \eqref{eq:pvi} provided by {\bf \tt NDSolve} of 
\emph{Mathematica}.
\end{itemize}

\subsection{First Example.}
\label{sec:ToyModel}
We consider
$$
\dot{x}=x^2,\:\: x(0)=x_0.
$$
The exact solution $x(t)$ of this problem and its associated double-angle formula $R(x)$ can be easily obtained after some algebraic manipulations. They have following expressions
$$
x(t)=\frac{x_0}{1-x_0t}\quad \mbox{and} \quad R(x)=\frac{x_0 x}{2x_0 -x}.
$$

In order to test the accuracy and efficiency of the duplication algorithm in this toy example, we will compute the errors $\hat{x}(t)-{x}(t)$, $\tilde{x}(t)-{x}(t)$ and $\check{x}(t)-{x}(t)$ as well as the computer time  of these errors on a compact interval contained in the domain of $x(t)$. To perform the numerical simulations we will consider $x_0=1$, thus $x(t)$ is defined in $(-\infty, 1)$. It is easy to check that $t=1$ is an asymptote for $x(t)$. With the aim of studying the behaviour of the duplication algorithm in a generic interval and for $t$ near the asymptote, we consider simulations in the intervals $I_1=\left[-0.5 ,0.5 \right]$ and $I_2=\left[-0.5 ,0.99 \right]$.

The approximation $\hat{x}(t)$ has been obtained by partitioning the integration intervals $I_1$ and $I_2$ in 240 and 10.000 points respectively. As we can see in Figure \ref{diagrama1}, the difference between the exact solution and the approximation by the duplication algorithm, with the exact double-angle formula $R(x)$, produces an error of order of magnitude $10^{-9}$ with a computer time of $0.0019$ seconds in $I_1$. Note that, as we approach the asymptote $t=1$ in the interval $I_2$, the accuracy of the simulation get worse and the computational effort increases.
\begin{figure}[h]
\includegraphics[scale=0.37]{./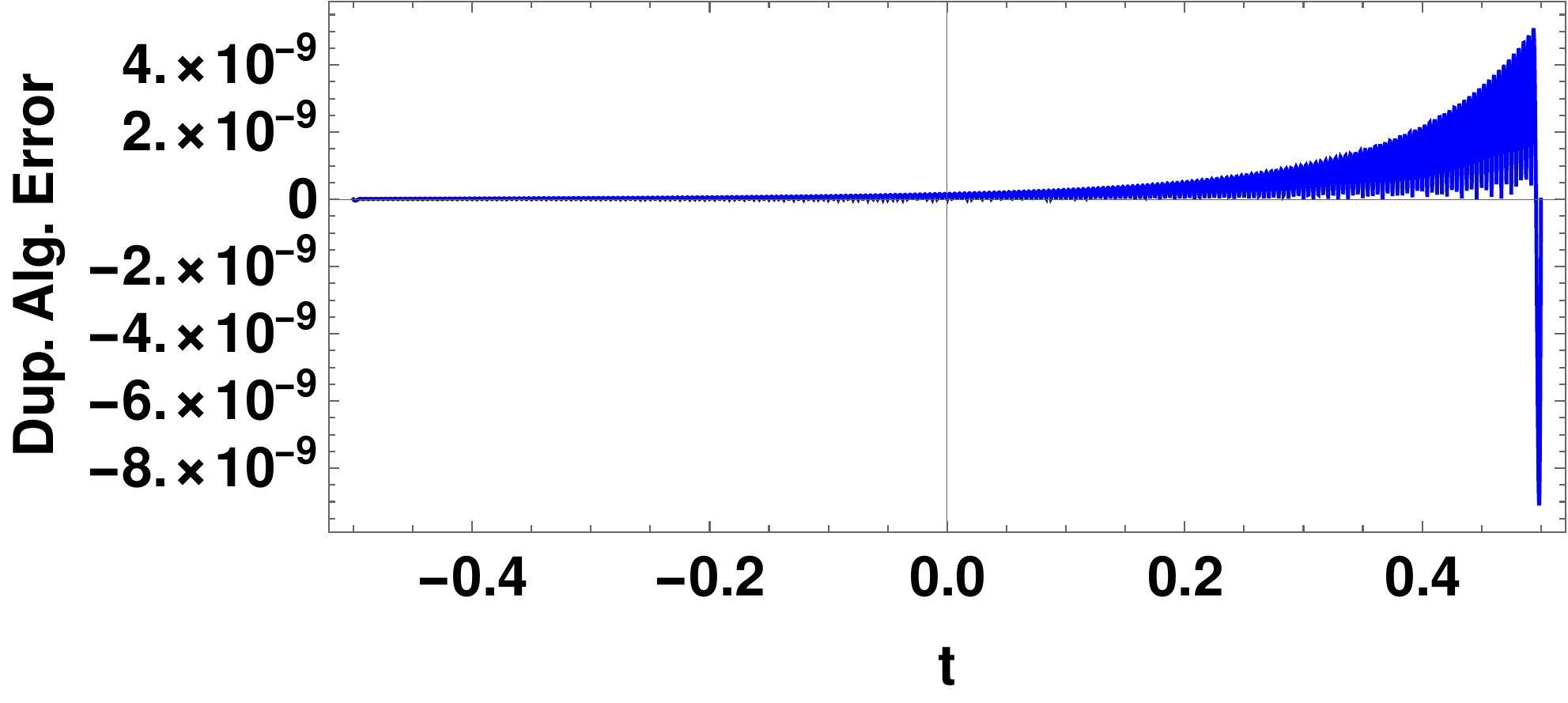}\,
\includegraphics[scale=0.37]{./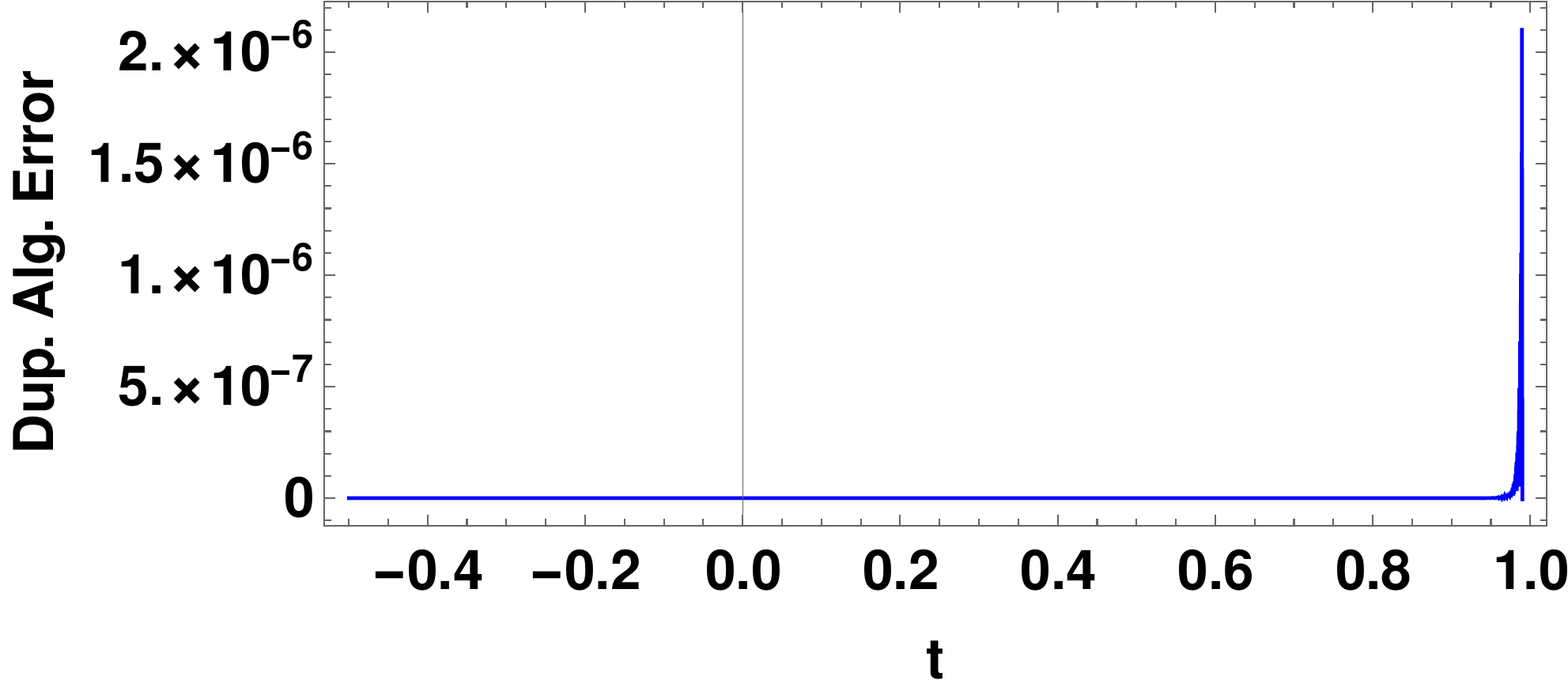}
\caption{\label{diagrama1} Propagation of $\hat{x}(t)-{x}(t)$ in $I_1$ with computer time of $0.0019$ s (left) and $I_2$ with computer time of $0.2845$ s (right).}  
\end{figure}

In Figure \ref{diagrama2} we show the error propagation between the exact solution $x(t)$ and $\tilde{x}(t)$, which we obtain by numerical simulation applying the duplication algorithm with an approximation of $R(x)$ given by a 20-th order Taylor polynomial. The error in this simulation behaves as in the case of $\hat{x}(t)$ with almost identical computer times.
\begin{figure}[h]
\includegraphics[scale=0.37]{./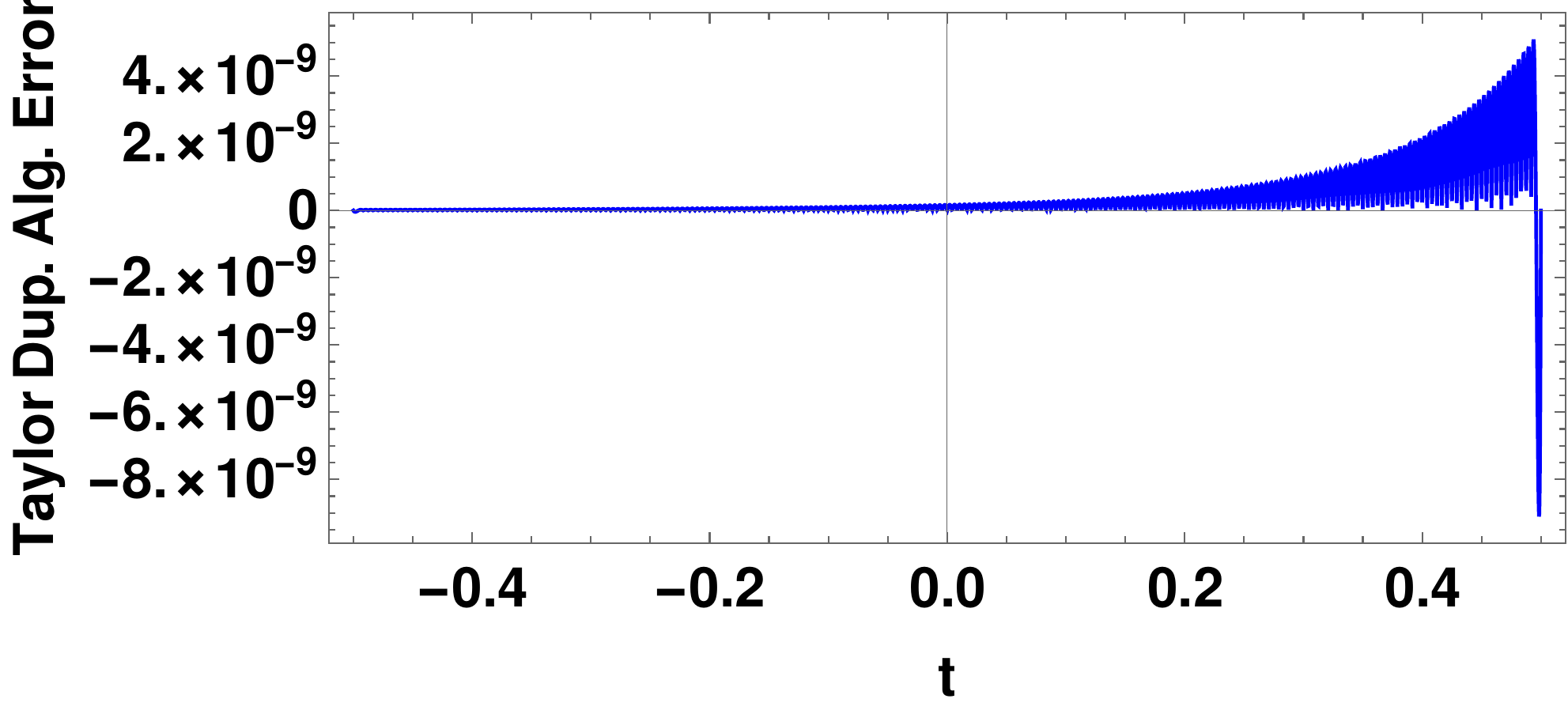}\,
\includegraphics[scale=0.37]{./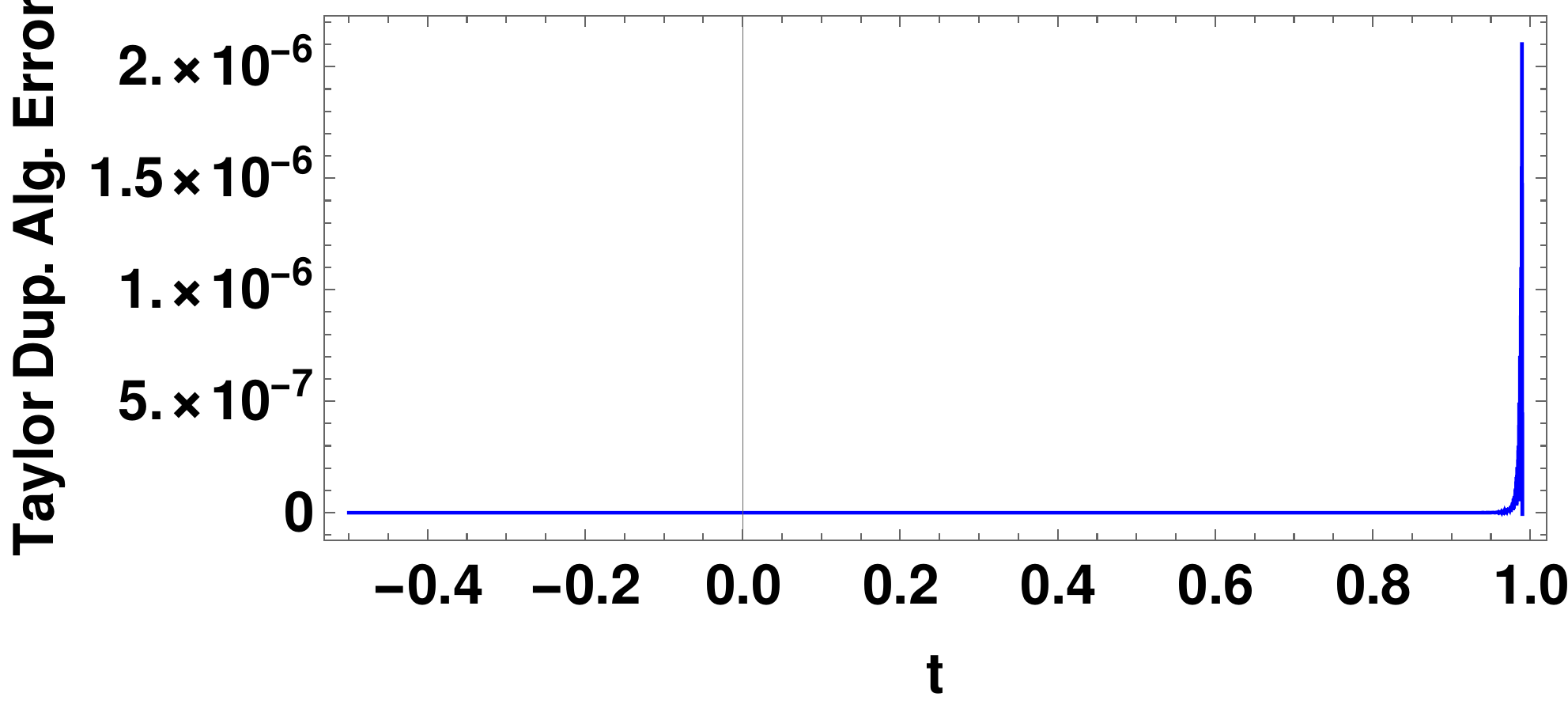}
\caption{\label{diagrama2}Propagation of $\tilde{x}(t)-{x}(t)$ in $I_1$ (left) and $I_2$ (right) with a 20th Taylor polynomial for $R(x)$. Computer time $0.0022$ and $0.2838$ s for $I_1$ and $I_2$, respectively.}
\end{figure}

Remarkably, the order of magnitude of the error in the previous procedures is less than the one provided by \emph{Wolfram Mathematica}. Indeed, Figure \ref{diagrama3} shows the difference of the exact solution versus the standard numerical integration method {\bf \tt  NDSolve} \emph{Mathematica} commad with options: {Method = Automatic, AccuracyGoal= 20 and MachinePrecision} which has error of order of magnitude $10^{-8}$ and $10^{-3}$ in $I_1$ and $I_2$ respectively, with computer times  $0.0024$ and $0.0020$ seconds.
\begin{figure}[h]
\includegraphics[scale=0.37]{./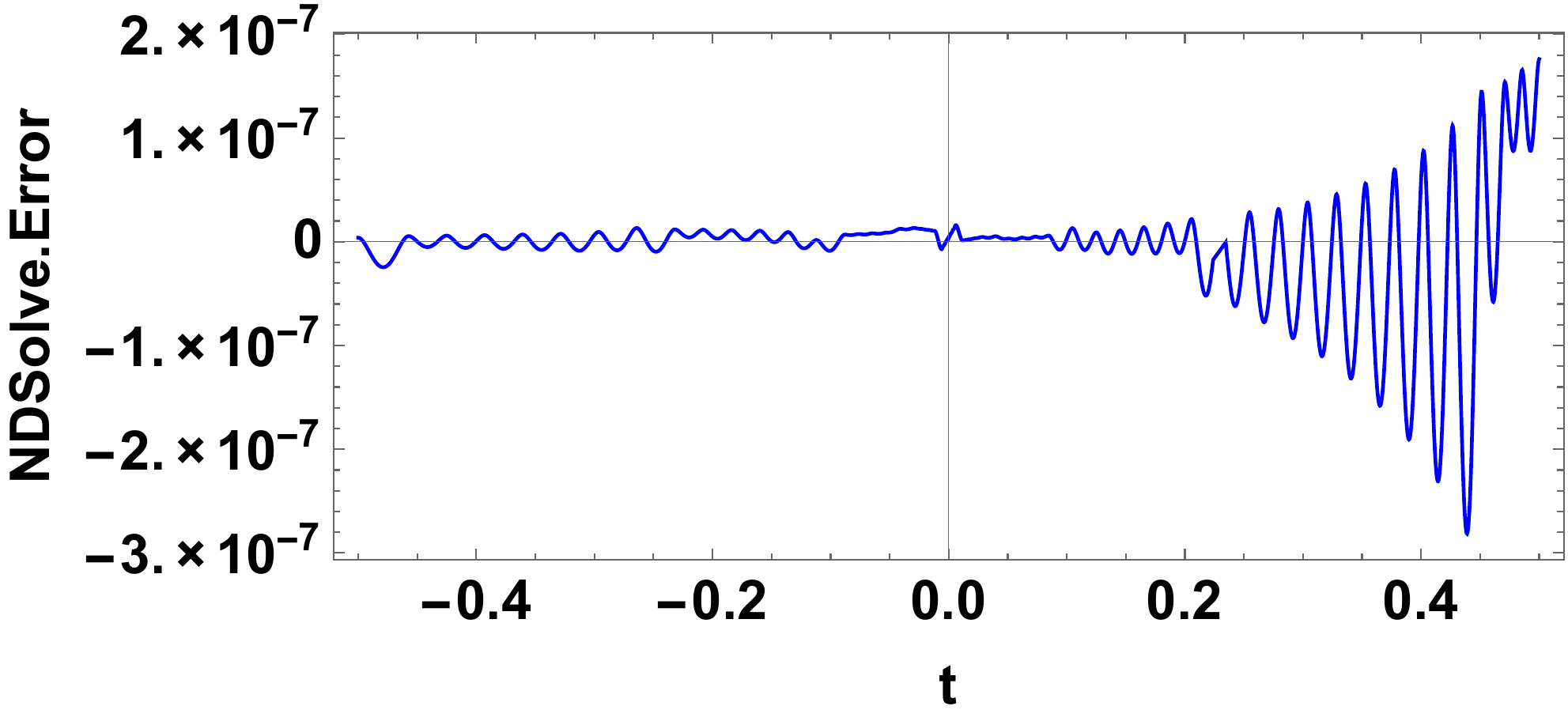}\,
\includegraphics[scale=0.37]{./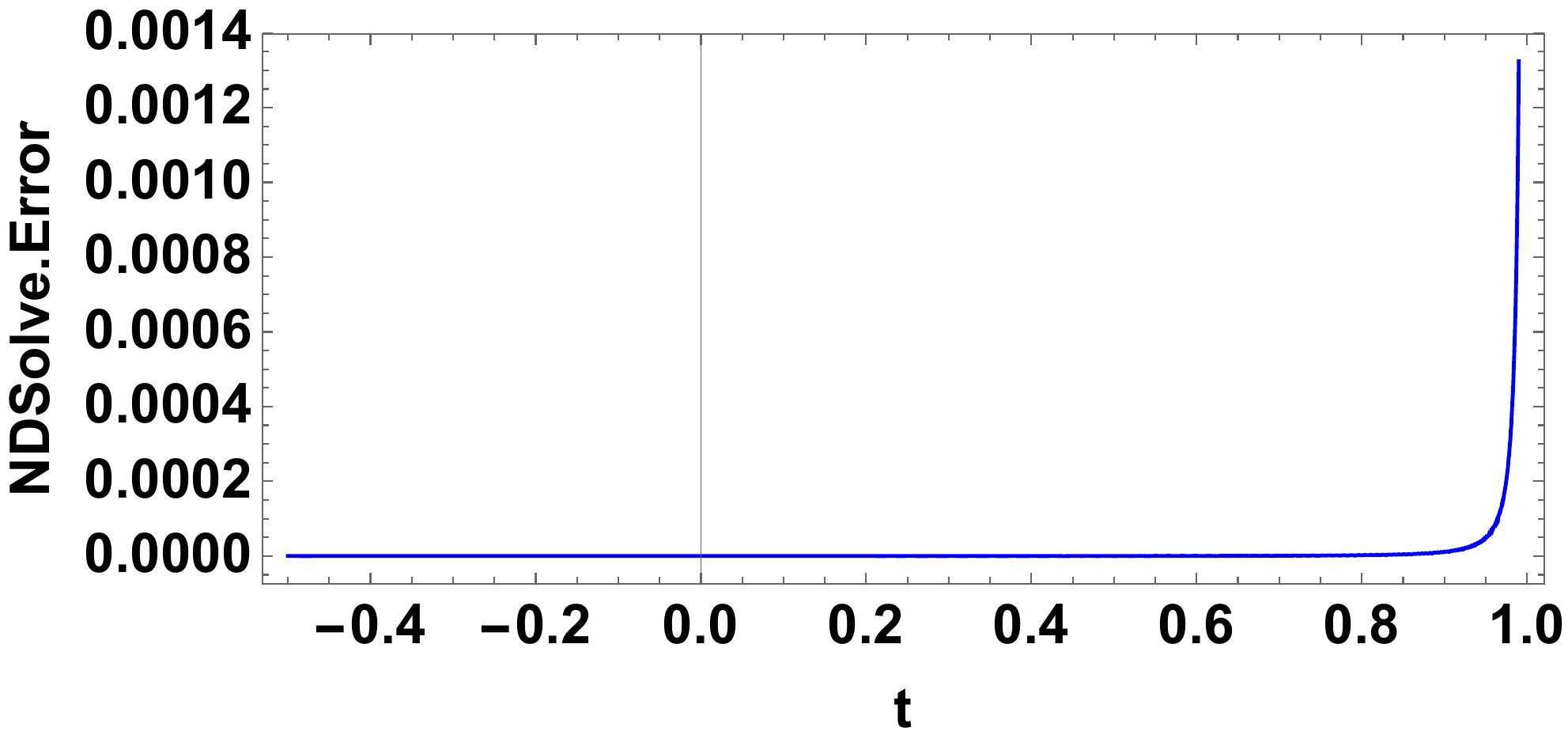}
\caption{\label{diagrama3}Propagation of $\check{x}(t)-{x}(t)$. Computer time of $0.0024$ and $0.0020$ s for $I_1$ and $I_2$, respectively. }
\end{figure}

In the following experiments we analyze the relation between the number of interpolation point and the accuracy of the approximation. To this aim we consider again $x_0=1$, in Figure~\ref{comparacion2} we simulate $x(t)$ in the interval $[-0.5,0.5]$ using 20th order  Taylor polynomial for the double-angle formula. As we increase the number of points in the interpolation process, computation time and accuracy increase linearly. However, for more that 4000 interpolation points, the error does not get better than $10^{-14}$.
\begin{figure}[h]
\label{diagrama5}
\includegraphics[scale=0.36]{./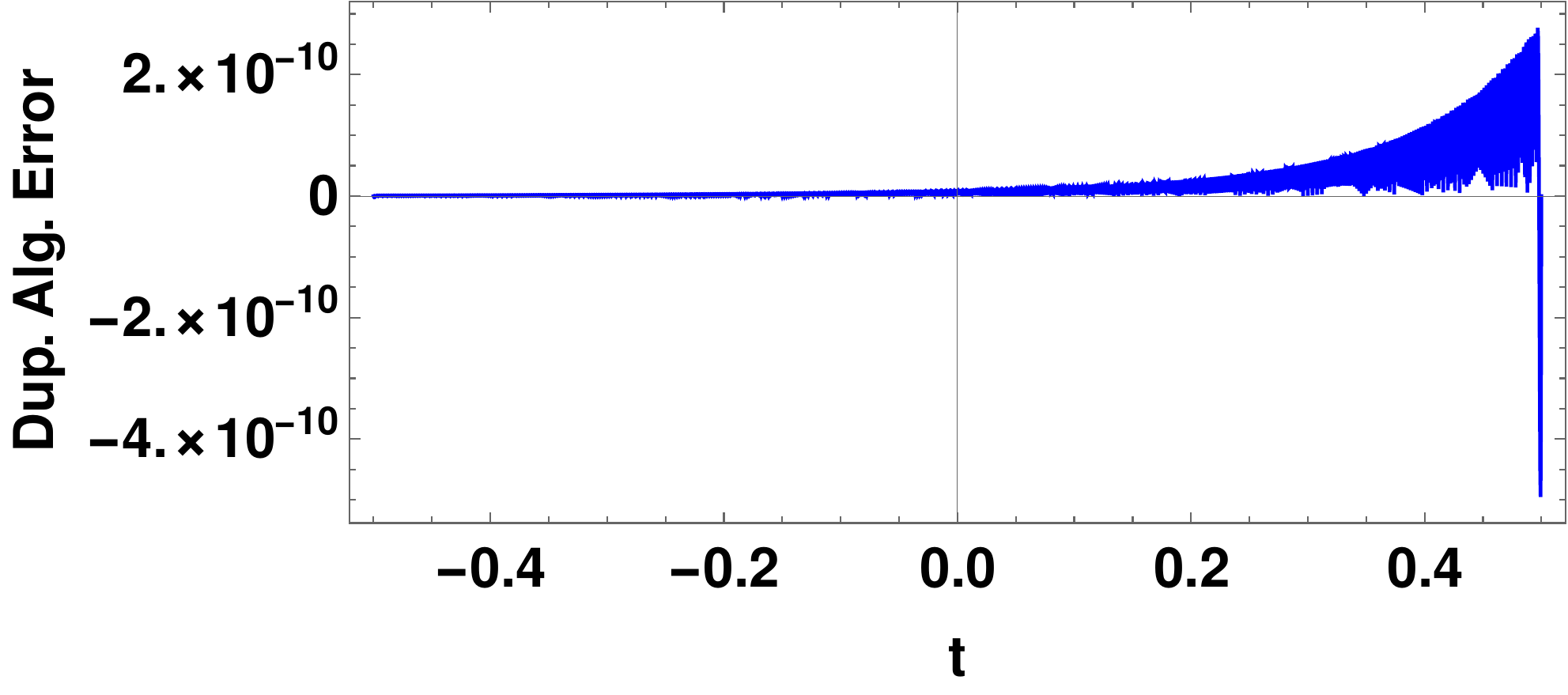}\quad
\includegraphics[scale=0.36]{./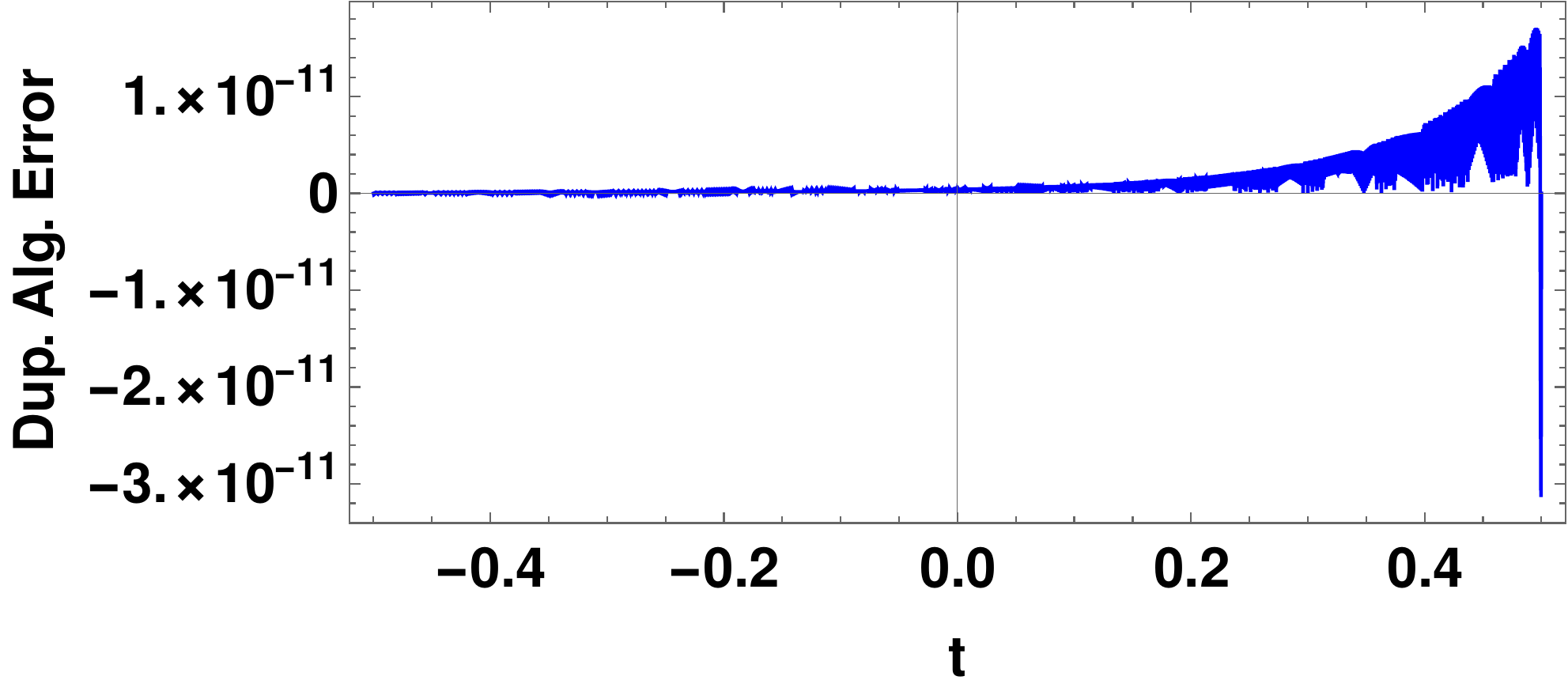}\\
\includegraphics[scale=0.36]{./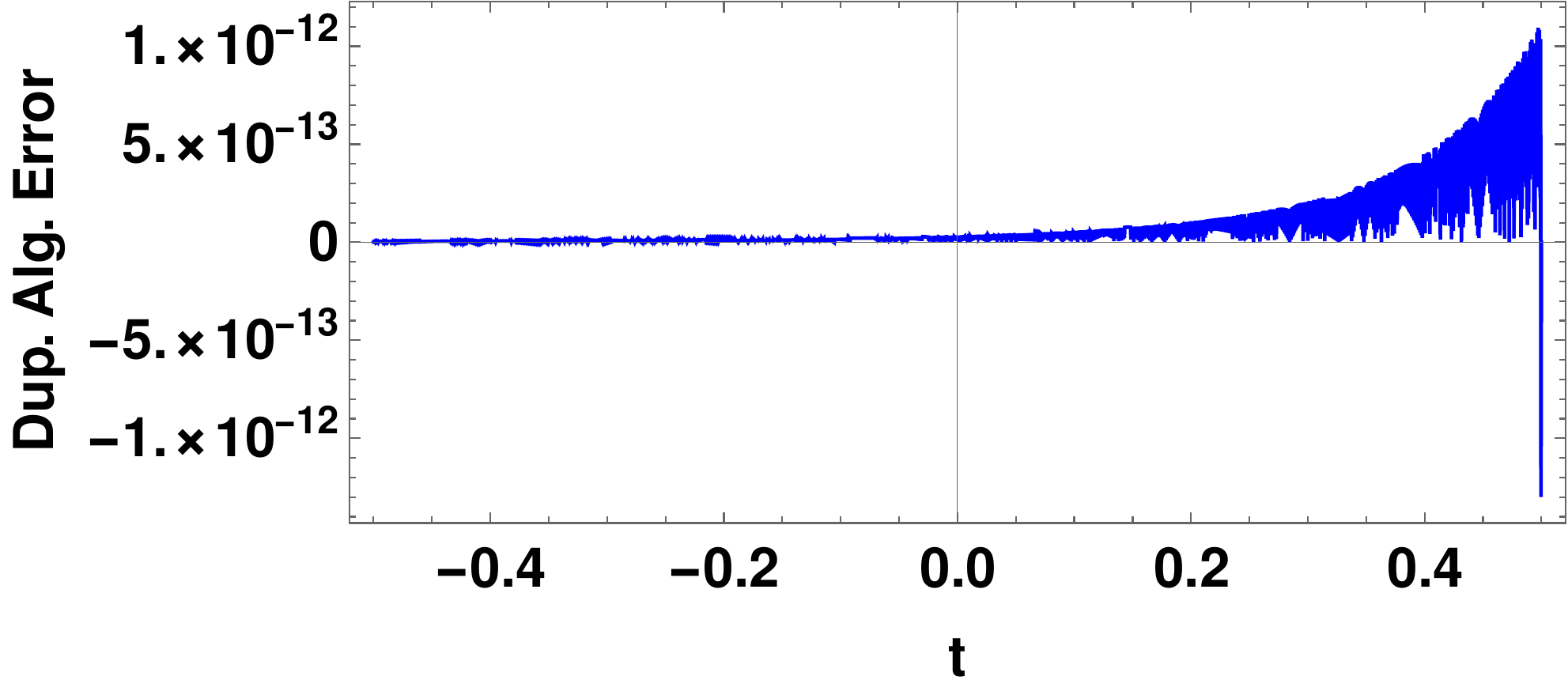}\quad
\includegraphics[scale=0.36]{./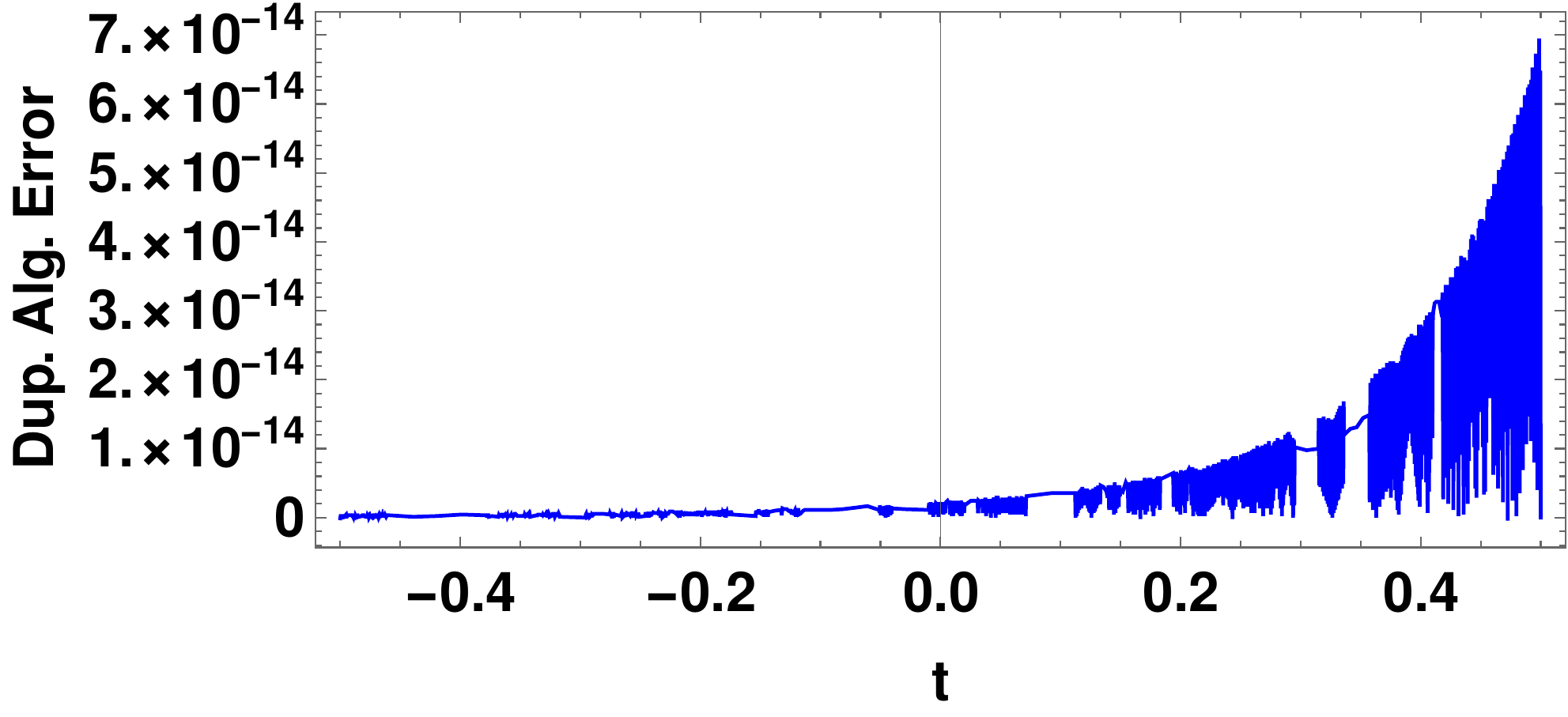}
\caption{Propagation of $\hat{x}(t)-{x}(t)$. Computer time equal to $0.0229$, $0.0400$, $0.0809$  and $0.1727$ s for  $500$, $1000$, $2000$ and $4000$ interpolation points, respectively.}
\label{comparacion2}
\end{figure}

\subsection{Second Example.}
\label{sec:SecondModel}
In the previous example we dealt with a simple equation and duplication algorithm. This was done with the aim of getting all the elements needed for the comparisons in a straightforward manner. Our next experiment addresses a more elaborated model, whose initial condition is close to the limit of the theory applicability. Precisely, the  second example is described by the following initial value problem
\begin{equation} 
\label{eq:Weierstrass}
\dot{x}=\sqrt{4 x^3-\frac{13 }{12}x-\frac{35}{216}},\quad x(0)=x_0=-\frac{5}{12}+\epsilon,
\end{equation}
where $|\epsilon| \ll 1$. 
As the reader may notice, the case $\epsilon=0$ implies that $\dot x(0)=0$. Thus, $x(t)$ does not fulfill the hypothesis of Corollary~\ref{coro:AdditionTheorem}, neither the initial value problem \eqref{eq:Weierstrass} satisfies the theorem of existence and uniqueness for ordinary differential equations. In this section we will consider an initial condition as close as possible to $x_0=-{5}/{12}$ to check if the performance of our method is affected by the limits of its applicability.

Equation \eqref{eq:Weierstrass} is written in the same way as the derivative of the $\wp$-Weierstrass elliptic function. However, the initial condition is evaluated in $t=0$ preventing $x(t)$ to have a pole in the origin as it is the case for $\wp$. Thus, we can consider the solution of the above equation as the restriction to the real domain of a $\wp$-Weierstrass elliptic function for which the lattice has been displaced from the origin in such a way that there is no poles in the real line. Alternatively, the explicit solution can be obtained in terms of the Jacobi elliptic functions \cite{Lawden1989}. Indeed, we have the following expression for the unique solution of \eqref{eq:Weierstrass} 
\begin{equation} 
\label{eq:WeierstrassSolution}
x(t)=\frac{7}{12}-\text{dn}\left(t+\delta_\epsilon \left|\frac{1}{4}\right.\right)^2,
\end{equation}
where $\delta_\epsilon$ and $\epsilon$ are related by the following formula
$$\text{dn}\left(\delta_\epsilon \left|{1}/{4}\right.\right)^2=1-\epsilon.$$

By using the double-angle formula of the Jacobi \text{dn}-function, and after some cumbersome algebraic manipulations, we get the exact double-angle formula for \eqref{eq:WeierstrassSolution}
\begin{eqnarray}
\label{eq:FormulaDobleEpsilon} 
R(x)=\frac{7}{12}-\left(\frac{ \alpha\,{P_2}\, ({P_2}-6 {P_3})}{{P_2}^2-96\,\epsilon\, {P_1}^2  }+\frac{ \beta \,{P_1} }{{P_2}^2-96 \,\epsilon\,{P_1}^2  }\right)^2,
\end{eqnarray}
where $\alpha=\sqrt{1-\epsilon }$, $\beta=2 \, \sqrt{6\,\epsilon-24 \,\epsilon^2 }$ and
$$
 {P_1}= \sqrt{(864 x^3-234 x-35)\left(144 x^2-168 x-59\right)^2},
\quad{P_2}=144 x^2+120 x-11,\quad{P_3}= 12 x-7.
$$
Therefore, as in the previous example, we can make comparisons between the exact solution $x(t)$ of \eqref{eq:Weierstrass}, the standard numerical approximation provided by \emph{Wolfram Mathematica} $\check{x}(t)$ and our duplication algorithm. 

\begin{figure}[h!]
\subfigure[Propagation of $\hat{x}(t)-{x}(t)$. Computer time equal to $0.0044$ s for 100 interpolation points. Taylor of order 20 for $x(t)$.]
{\label{MathematicaAprox}
\includegraphics[scale=0.35]{./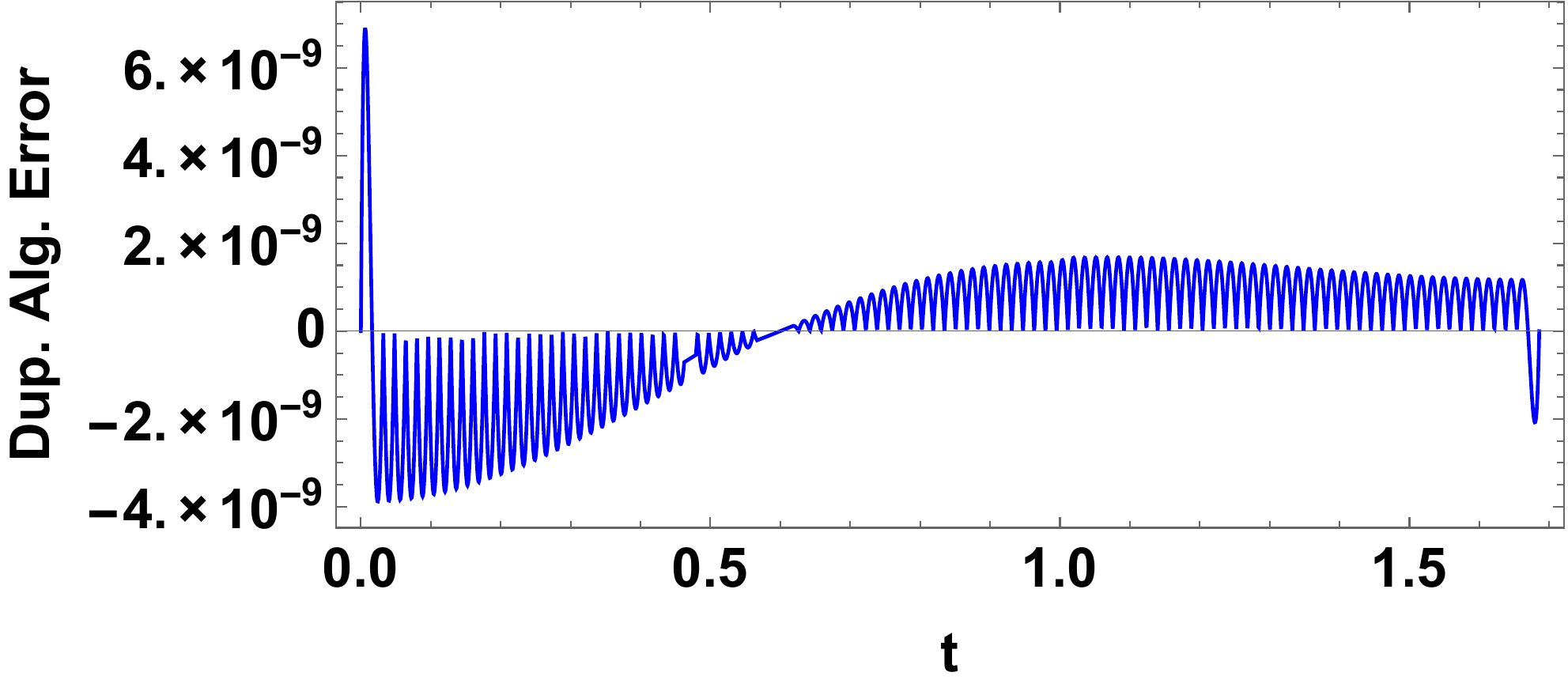}}\quad
\subfigure[Propagation of $\check{x}(t)-{x}(t)$. Computer time $0.0063$ s. {\bf \tt NDSolve} with options: Method = Automatic, AccuracyGoal= 20 and MachinePrecision.]
{\label{DuplicationAprox}
\includegraphics[scale=0.35]{./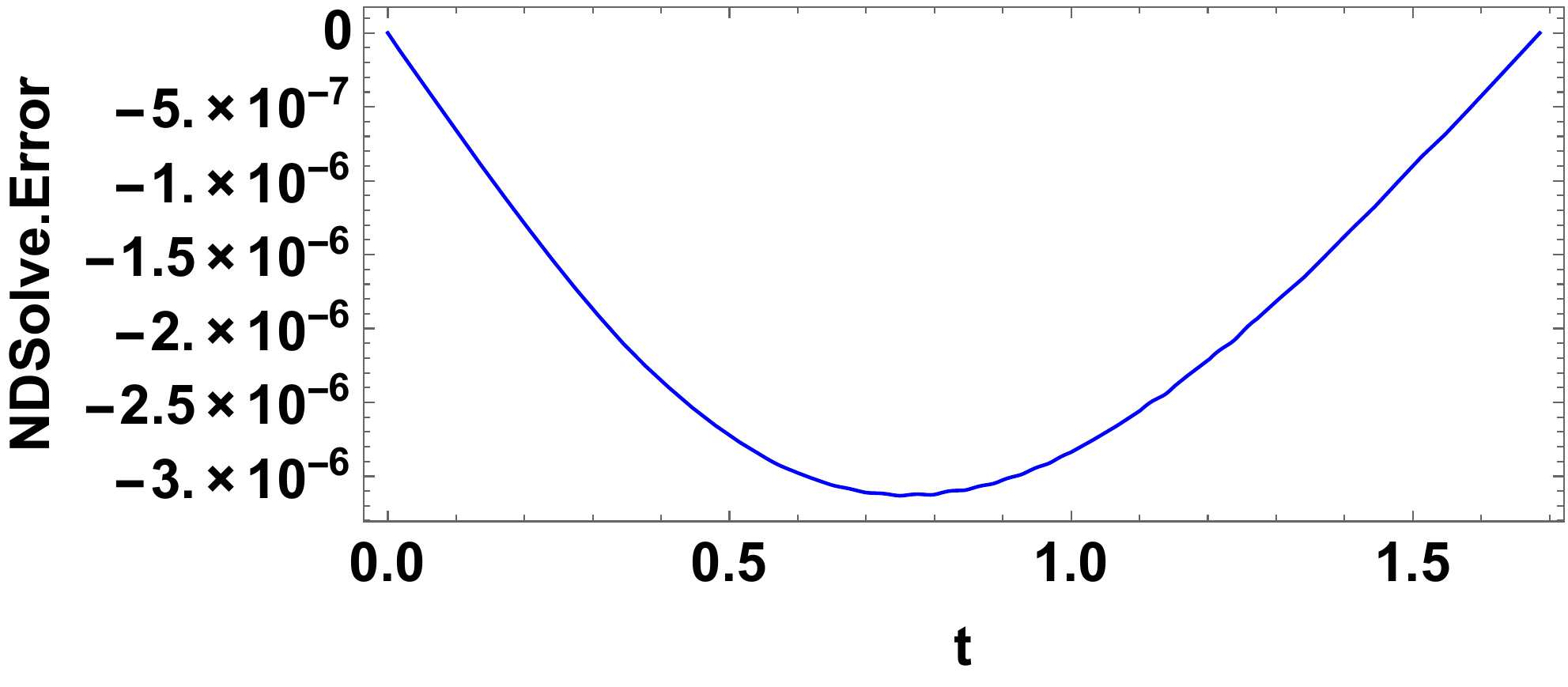}}
\caption{Comparison between the duplication algorithm and the {\bf \tt NDSolve} integrator for equation \eqref{eq:Weierstrass}.  The integration interval correspond to half a period of the function $x(t)$. Small parameters are set to $(\epsilon,\delta_\epsilon)= (2.5\times 10^{-31},10^{-15})$.}
\label{fig:ComparacionMathematicaDupAlgWeierstrassExacta}
\end{figure}

In Figure~\ref{fig:ComparacionMathematicaDupAlgWeierstrassExacta} we use the exact double-angle formula \eqref{eq:FormulaDobleEpsilon} to simulate the solution of \eqref{eq:Weierstrass} obtained with the duplication algorithm, then we compare it with the approximation given by the software package \emph{Wolfram Mathematica}. We observe that the accuracy of the duplication algorithm is two orders of magnitude better. Moreover, the computer time is on the same order for both approaches, with a slight improvement in the duplication algorithm. 

We also investigate how the order of the Taylor polynomials for the double-angle formula improves the approximations. In Figure~\ref{fig:ComparacionDupAlgWeierstrassOrdenTaylor}, we show the numerical experiments for orders 10, 15, 20 and 30. Setting the number of interpolation points to 100, we obtain that the error remains fixed around $10^{-10}$ for Taylor orders above 30.

Finally, we assess the role of the number of interpolation points in the accuracy of the approximation that we provide. More precisely, in  Figure~\ref{fig:ComparacionMathematicaDupAlgWeierstrassAprox} we use a Taylor polynomials of order 30th to approximate the double-angle formula. Then we show the evolution of the error when comparing with the exact solution for 80, 160, 320, 640, 1280 and 2560 interpolation points. As we duplicate the number of points, the error magnitude is approximately multiplied by $10^{-1}$. However, our experiments shows that, for this fixed Taylor approximation of the double-angle formula, the precision  stabilizes around $10^{-14}$ as we increase the number of interpolation points.

\begin{figure}[h!]
\subfigure[Propagation of $\tilde{x}(t)-{x}(t)$. Computer time equal to $0.0138$ s. Taylor of order 10.]
{\label{MathematicaAprox}
\includegraphics[scale=0.35]{./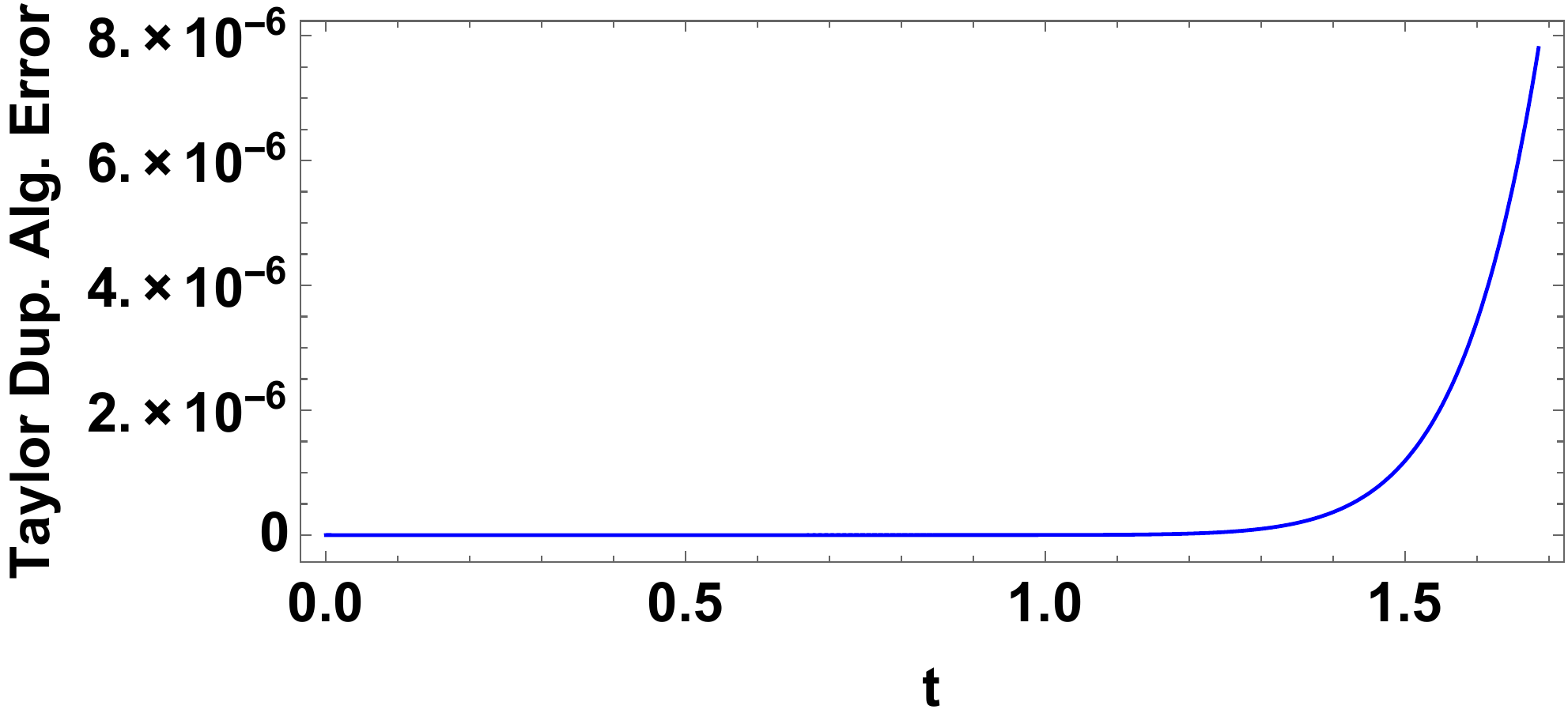}}\quad
\subfigure[Propagation of $\tilde{x}(t)-{x}(t)$. Computer time equal to $0.0191$ s. Taylor of order 15.]
{\label{DuplicationAprox}
\includegraphics[scale=0.35]{./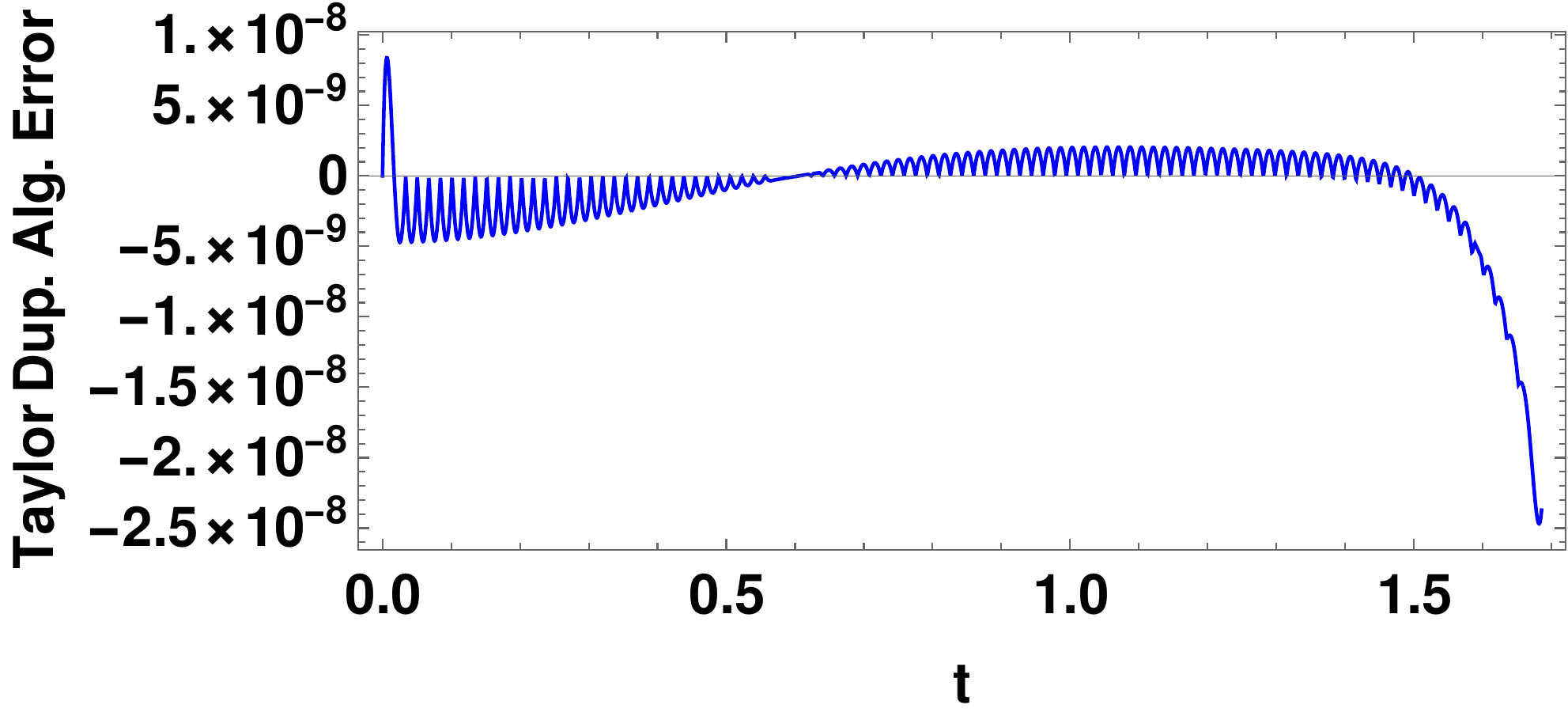}}
\subfigure[Propagation of $\tilde{x}(t)-{x}(t)$. Computer time equal to $0.0251$ s. Taylor of order 20.]
{\label{MathematicaAprox1}
\includegraphics[scale=0.35]{./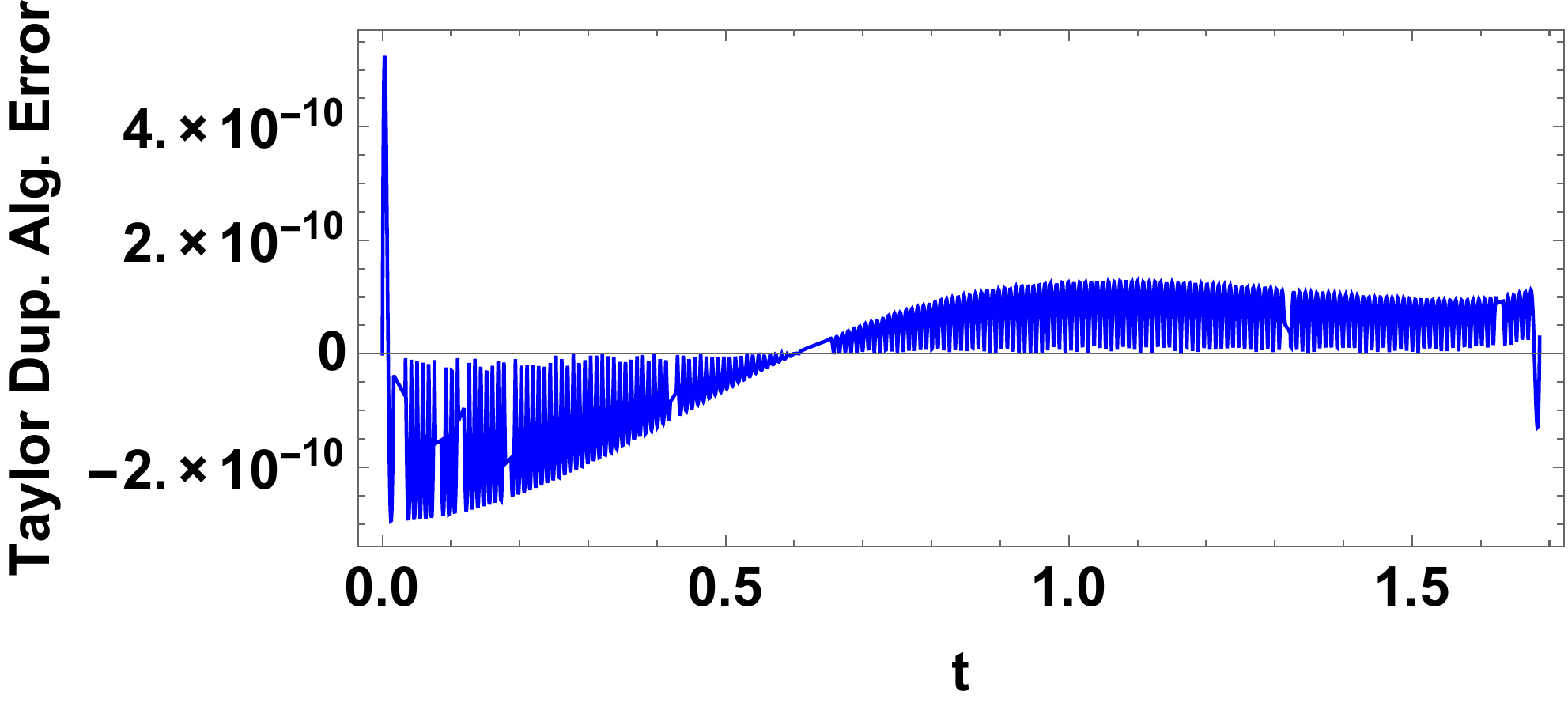}}\quad
\subfigure[Propagation of $\tilde{x}(t)-{x}(t)$. Computer time equal to $0.0323$ s. Taylor of order 30.]
{\label{DuplicationAprox1}
\includegraphics[scale=0.35]{./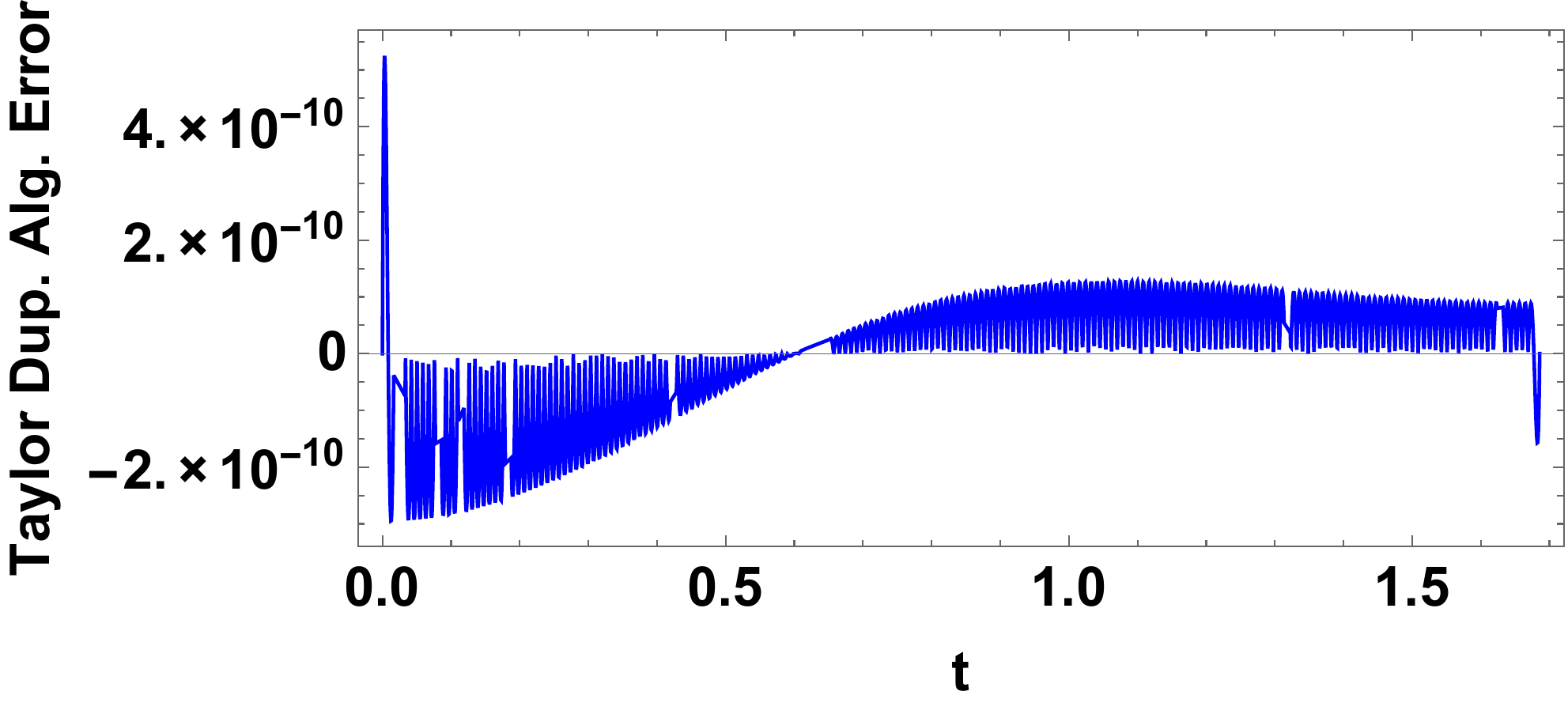}}
\caption{The influence of the order of approximation to the double-angle formula. The integration interval correspond to half a period of the function $x(t)$ and 200 interpolation points. Small parameters are set to $(\epsilon,\delta_\epsilon)= (2.5\times 10^{-31},10^{-15})$.}
\label{fig:ComparacionDupAlgWeierstrassOrdenTaylor}
\end{figure}

\begin{figure}[h!]
\subfigure[Propagation of $\tilde{x}(t)-{x}(t)$. Computer time equal to $0.0270$ s for 80 interpolation points.]
{\label{MathematicaAprox}
\includegraphics[scale=0.35]{./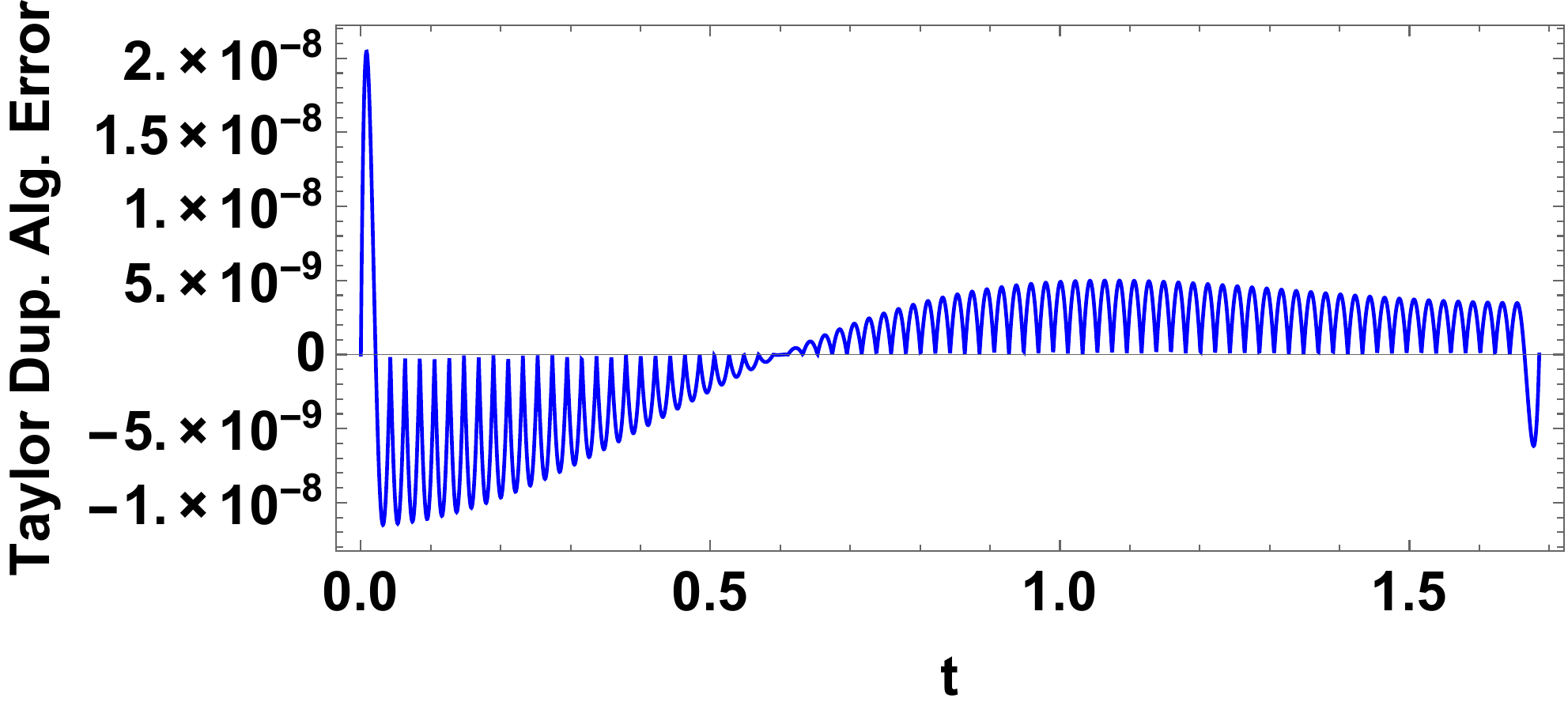}}\quad
\subfigure[Propagation of $\tilde{x}(t)-{x}(t)$. Computer time equal to $0.0534$ s for 160 interpolation points.]
{\label{DuplicationAprox}
\includegraphics[scale=0.35]{./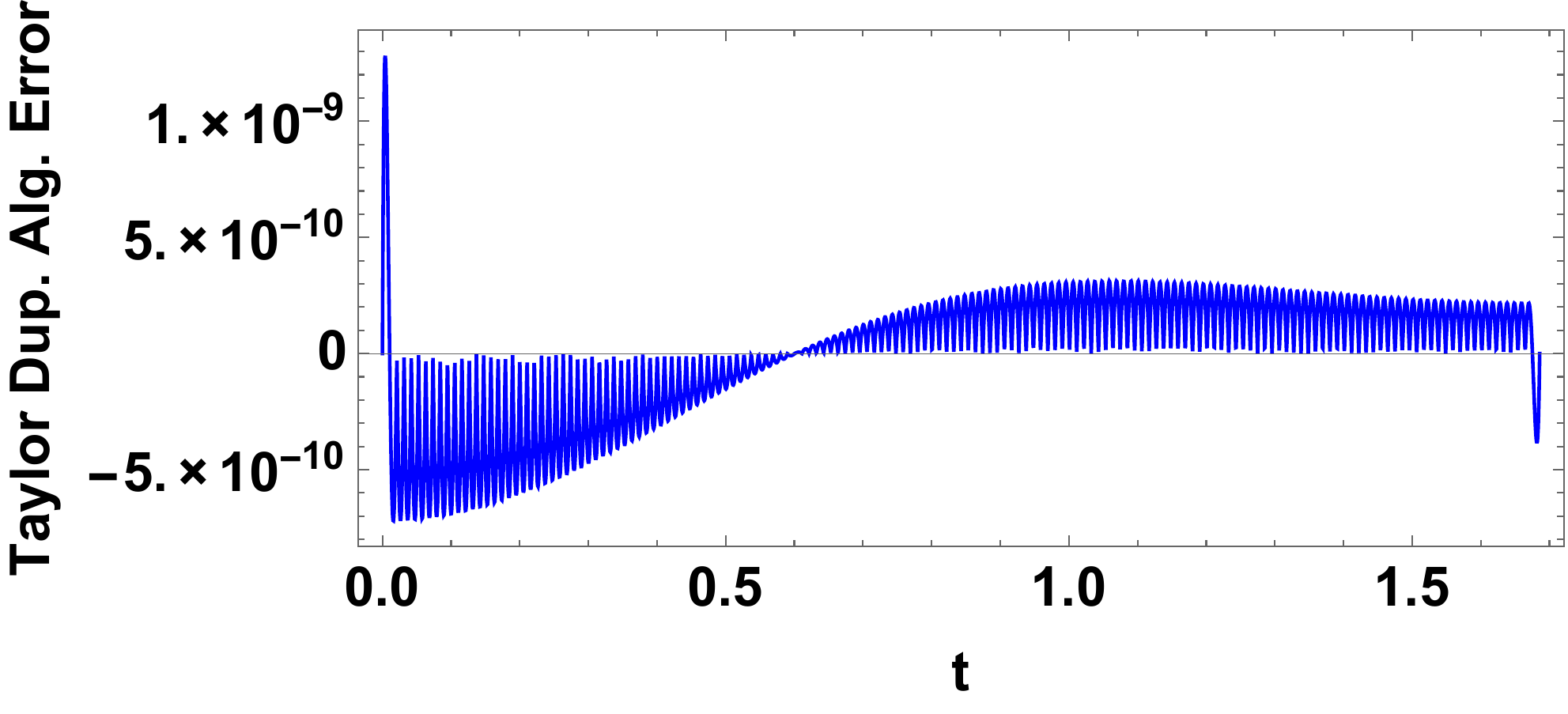}}\\
\subfigure[Propagation of $\tilde{x}(t)-{x}(t)$. Computer time equal to $0.0986$ s for 320 interpolation points.  ]
{\label{MathematicaAprox}
\includegraphics[scale=0.35]{./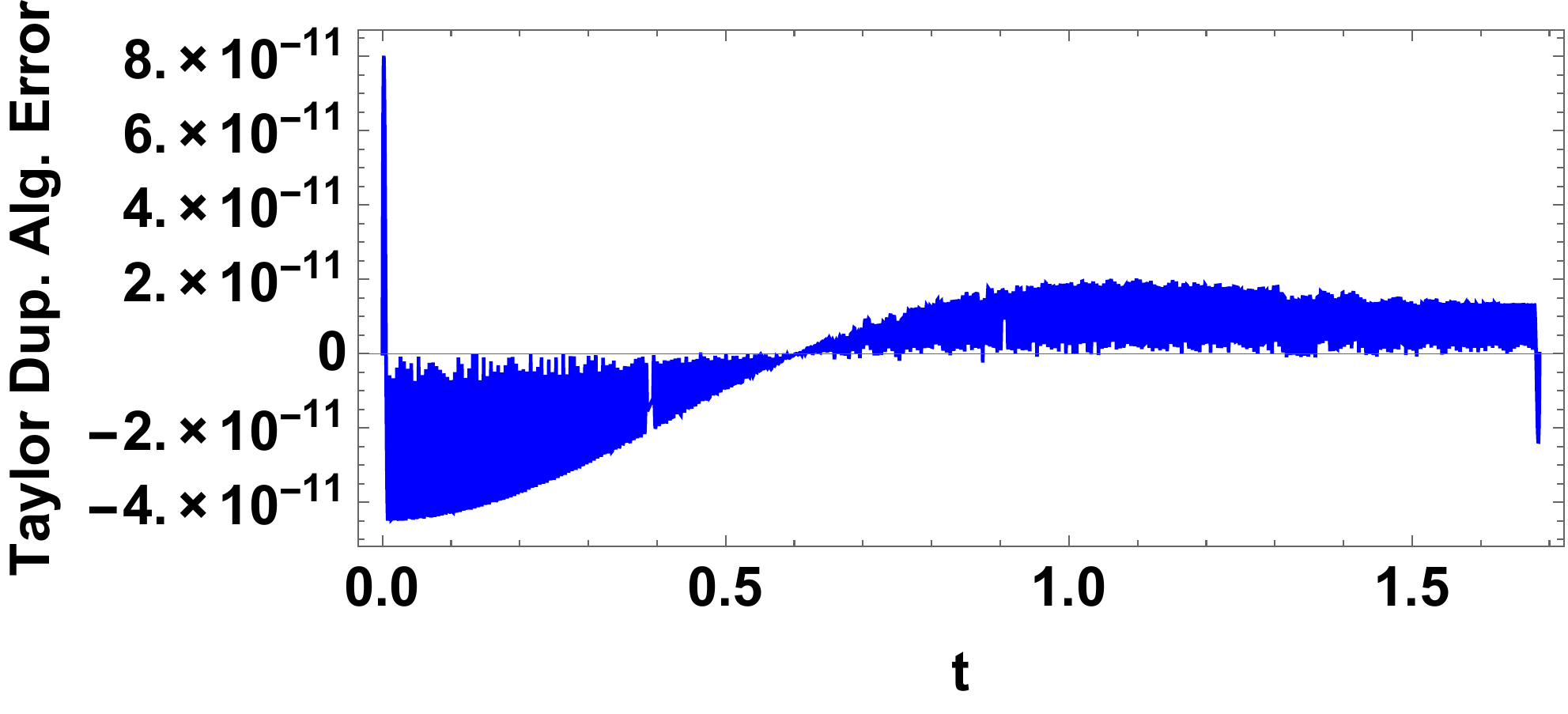}}\quad
\subfigure[Propagation of $\tilde{x}(t)-{x}(t)$. Computer time equal to $0.1799$ s for 640 interpolation points.]
{\label{DuplicationAprox}
\includegraphics[scale=0.35]{./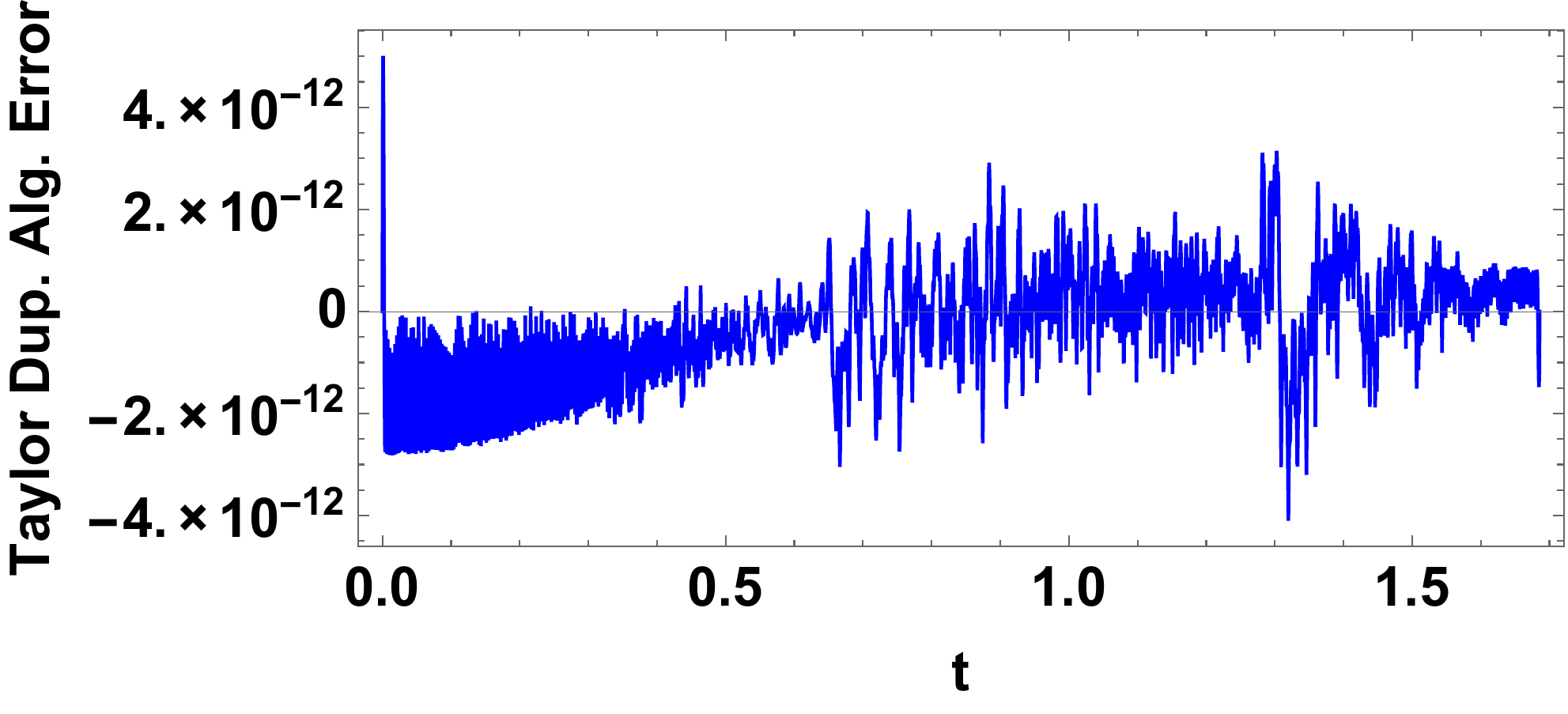}}\\
\subfigure[Propagation of $\tilde{x}(t)-{x}(t)$. Computer time equal to $0.203062$ s for 1280 interpolation points.]
{\label{DuplicationAprox}
\includegraphics[scale=0.35]{./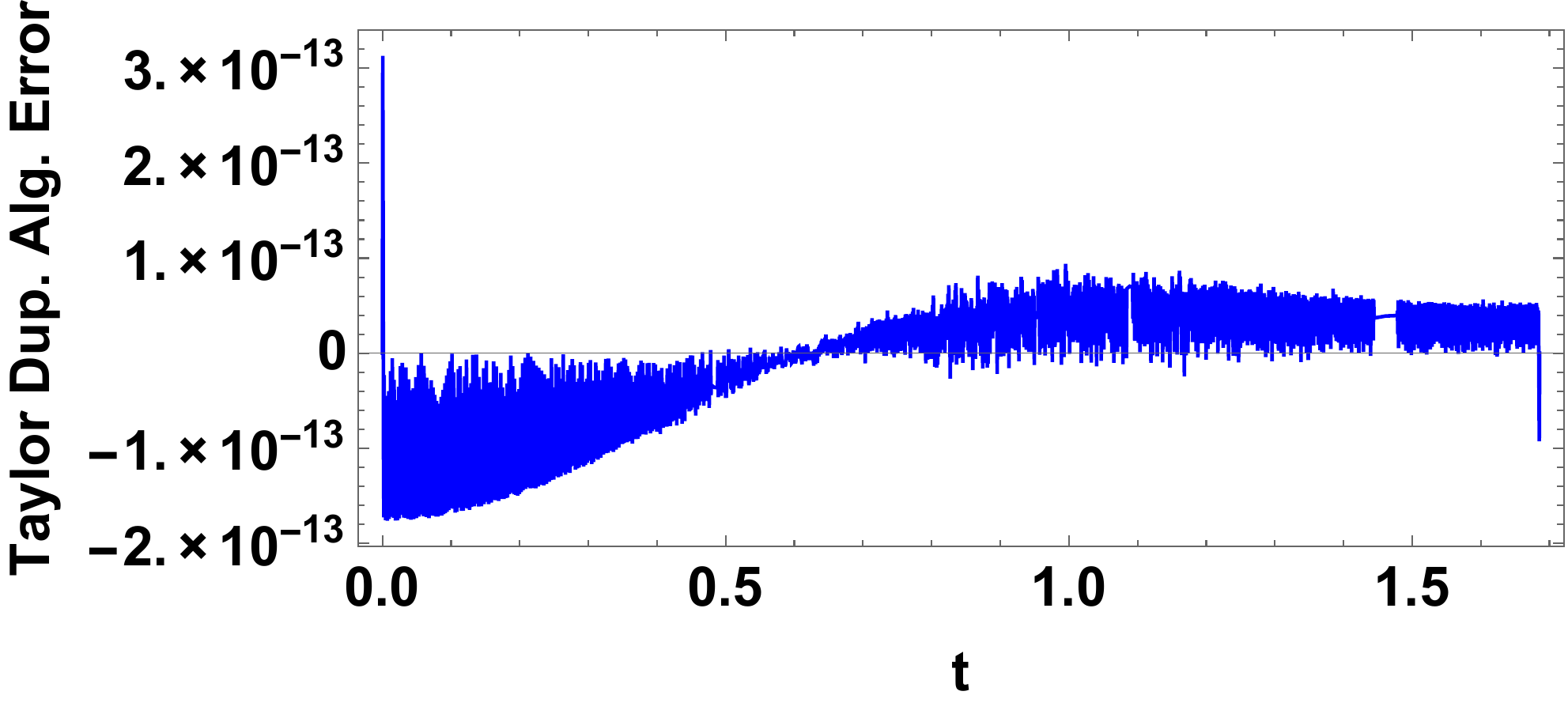}}\quad
\subfigure[Propagation of $\tilde{x}(t)-{x}(t)$. Computer time equal to $0.3693$ s for 2560 interpolation points.]
{\label{DuplicationAprox}
\includegraphics[scale=0.35]{./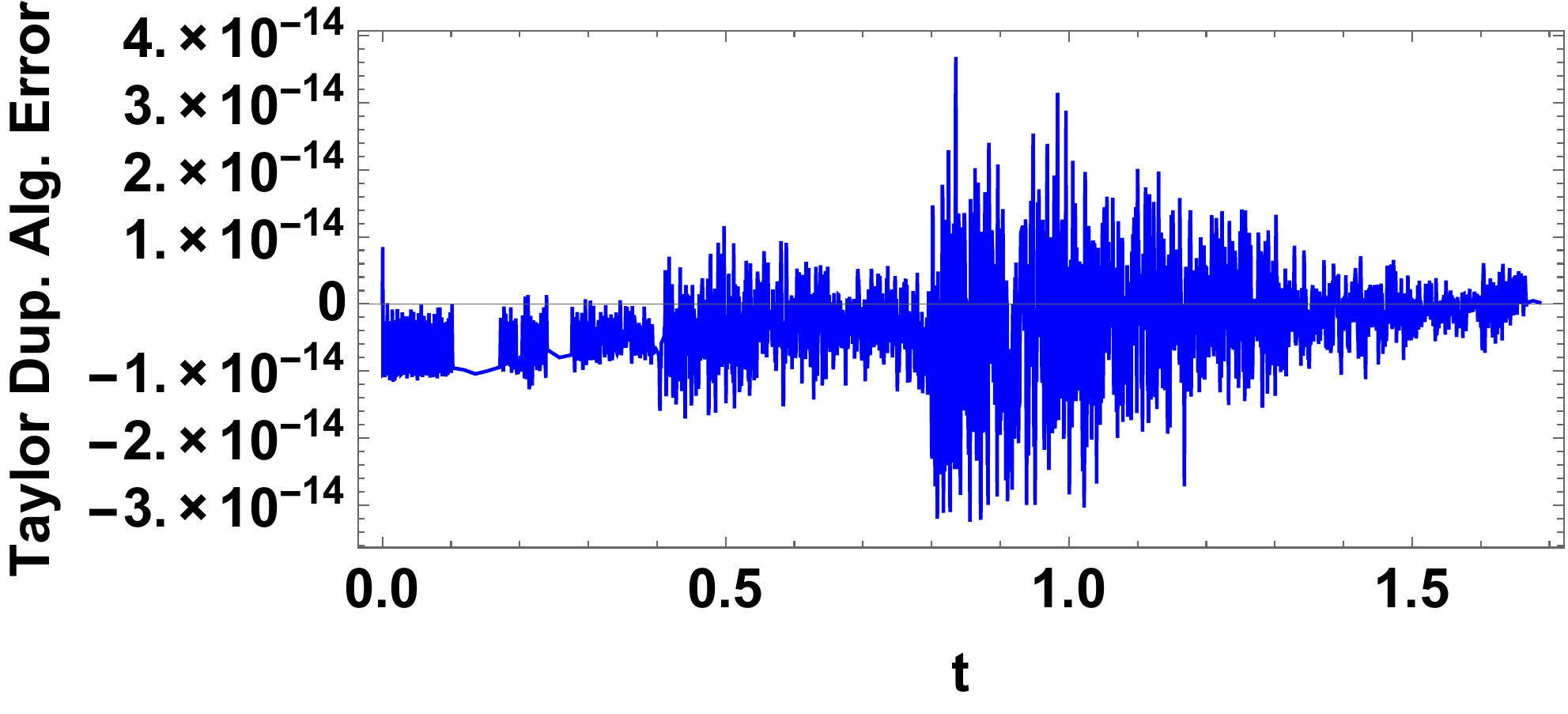}}
\caption{The role of the number of interpolation points. The double-angle formula and $x(t)$ have been approximated by a 30th order Taylor polynomial.  The integration interval correspond to half a period of the function $x(t)$. Small parameters are set to $(\epsilon,\delta_\epsilon)= (2.5\times 10^{-31},10^{-15})$.}
\label{fig:ComparacionMathematicaDupAlgWeierstrassAprox}
\end{figure}

\appendix
\section{Double-Angle Formula Taylor Expansion}
\label{sec:Appendix}
The Taylor expansion of the double-angle formula is obtained by following the strategy given in Section~\ref{sec:Computation}. Here we include the expression of the derivatives up to the 10th order
\begin{eqnarray*}
R(x_0)&=&x_0,\quad R^{(1)}(x_0)= 2,\quad R^{(2)}(x_0)=\frac{2f^{(1)}(x_0)}{f(x_0)},\quad R^{(3)}(x_0)= \frac{ 6  f^{(2)}(x_0)}{f(x_0)},
\end{eqnarray*}
\begin{eqnarray*}
R^{(4)}(x_0)&=& \frac{2}{f(x_0)^2}\left(7 f^{(3)}(x_0) f(x_0)+6 f^{(1)}(x_0)  f^{(2)}(x_0) \right),\\
\end{eqnarray*}
\begin{eqnarray*}
R^{(5)}(x_0)&=& \frac{30 \left(f(x_0) f^{(4)}(x_0)+2 f^{(2)}(x_0)^2+2 f^{(3)}(x_0) f^{(1)}(x_0)\right)}{f(x_0)^2},\\
\end{eqnarray*}
\begin{eqnarray*}
R^{(6)}(x_0)&=&\frac{1}{f(x_0)^3} \left(62 f^{(5)}(x_0) f(x_0)^2+20 \left(28 f(x_0) f^{(3)}(x_0) f^{(2)}(x_0)\right.\right.\\
&&\left.\left.+3 f^{(3)}(x_0) f^{(1)}(x_0)^2+f^{(1)}(x_0) \left(11 f(x_0) f^{(4)}(x_0)+9 f^{(2)}(x_0)^2\right)\right)\right),
\end{eqnarray*}
\begin{eqnarray*}
R^{(7)}(x_0)&=&\frac{14}{f(x_0)^3}
 \left(9 f^{(6)}(x_0) f(x_0)^2+10 \left(9 f^{(2)}(x_0)^3+4 f^{(4)}(x_0) f^{(1)}(x_0)^2\right.\right.\\
&&\left.\left. +5 f(x_0) \left(2 f^{(3)}(x_0)^2+f^{(5)}(x_0) f^{(1)}(x_0)\right)
\right.\right.\\
&&\left.\left. +\left(15 f(x_0) f^{(4)}(x_0)+23 f^{(3)}(x_0) f^{(1)}(x_0)\right) f^{(2)}(x_0)\right)\right),
\end{eqnarray*}
\begin{eqnarray*}
R^{(8)}(x_0)&=&\frac{1}{f(x_0)^4}\left(254 f^{(7)}(x_0) f(x_0)^3+28 \left(20 f^{(4)}(x_0) f^{(1)}(x_0)^3\right.\right.\\
&&\left.\left.
+f(x_0) \left(251 f(x_0) f^{(5)}(x_0) f^{(2)}(x_0)+5 f^{(3)}(x_0) \left(81 f(x_0) f^{(4)}(x_0)+199 f^{(2)}(x_0)^2\right)\right)
\right.\right.\\
&&\left.\left.+5 f^{(1)}(x_0)^2 \left(23 f(x_0) f^{(5)}(x_0)+32 f^{(3)}(x_0) f^{(2)}(x_0)\right)+f^{(1)}(x_0) \left(180 f^{(2)}(x_0)^3
\right.\right.\right.\\
&&\left.\left.\left.+f(x_0) \left(73 f(x_0) f^{(6)}(x_0)+455 f^{(3)}(x_0)^2\right)+715 f(x_0) f^{(4)}(x_0) f^{(2)}(x_0)\right)\right)\right),\\
\end{eqnarray*}
\begin{eqnarray*}
R^{(9)}(x_0)&=&\frac{1}{f(x_0)^4}\left(510 f^{(8)}(x_0) f(x_0)^3+84 \left(540 f^{(2)}(x_0)^4+90 f^{(5)}(x_0) f^{(1)}(x_0)^3
\right.\right.\\
&&\left.\left.+6 \left(29 f(x_0) f^{(6)}(x_0)+75 f^{(3)}(x_0)^2\right) f^{(1)}(x_0)^2
\right.\right.\\
&&\left.\left.+f(x_0)^2 \left(295 f^{(4)}(x_0)^2+488 f^{(3)}(x_0) f^{(5)}(x_0)\right)
\right.\right.\\
&&\left.\left.+f(x_0) \left(67 f(x_0) f^{(7)}(x_0)+1820 f^{(3)}(x_0) f^{(4)}(x_0)\right) f^{(1)}(x_0)
\right.\right.\\
&&\left.\left.+270 \left(7 f(x_0) f^{(4)}(x_0)+9 f^{(3)}(x_0) f^{(1)}(x_0)\right) f^{(2)}(x_0)^2
\right.\right.\\
&&\left.\left.+2 \left(405 f^{(4)}(x_0) f^{(1)}(x_0)^2+f(x_0) \left(130 f(x_0) f^{(6)}(x_0)
\right.\right.\right.\right.\\
&&\left.\left.\left.\left.+1235 f^{(3)}(x_0)^2+603 f^{(5)}(x_0) f^{(1)}(x_0)\right)\right) f^{(2)}(x_0)\right)\right),\\
\end{eqnarray*}
\begin{eqnarray*}
R^{(10)}(x_0)&=&\frac{2}{f(x_0)^5} \left(511 f^{(9)}(x_0) f(x_0)^4+3780 f^{(5)}(x_0) f^{(1)}(x_0)^4
\right.\\
&&\left.+252 f^{(1)}(x_0)^3 \left(117 f(x_0) f^{(6)}(x_0)+75 f^{(3)}(x_0)^2+175 f^{(4)}(x_0) f^{(2)}(x_0)\right)
\right.\\
&&\left.+12 f(x_0) \left(78330 f^{(3)}(x_0) f^{(2)}(x_0)^3+7 f(x_0) \left(3170 f^{(3)}(x_0)^3
\right.\right.\right.\\
&&\left.\left.\left.+4647 f^{(5)}(x_0) f^{(2)}(x_0)^2+14440 f^{(4)}(x_0) f^{(3)}(x_0) f^{(2)}(x_0)\right)
\right.\right.\\
&&\left.\left.
+f(x_0)^2 \left(2679 f^{(7)}(x_0) f^{(2)}(x_0)+5726 f^{(3)}(x_0) f^{(6)}(x_0)
\right.\right.\right.\\
&&\left.\left.\left.+8029 f^{(4)}(x_0) f^{(5)}(x_0)\right)\right)+42 f^{(1)}(x_0)^2 \left(7158 f(x_0) f^{(5)}(x_0) f^{(2)}(x_0)
\right.\right.\\
&&\left.\left.
+4350 f^{(3)}(x_0) f^{(2)}(x_0)^2+f(x_0) \left(683 f(x_0) f^{(7)}(x_0)+9530 f^{(3)}(x_0) f^{(4)}(x_0)\right)\right)
\right.\\
&&\left.+6 f^{(1)}(x_0) \left(18900 f^{(2)}(x_0)^4+158340 f(x_0) f^{(4)}(x_0) f^{(2)}(x_0)^2
\right.\right.\\
&&\left.\left.
+14 f(x_0) \left(2648 f(x_0) f^{(6)}(x_0)+13805 f^{(3)}(x_0)^2\right) f^{(2)}(x_0)
\right.\right.\\
&&\left.\left.+f(x_0)^2 \left(1237 f(x_0) f^{(8)}(x_0)+38115 f^{(4)}(x_0)^2+64974 f^{(3)}(x_0) f^{(5)}(x_0)\right)\right)\right).
\end{eqnarray*}

\section*{Acknowledgement}
Support from Research Agencies of Chile is acknowledged. They came in the form of research projects 11160224 of the Chilean national agency FONDECYT  and the UBB project 2020157 IF/R. The author J.L.Z. acknowledges support from CONICYT PhD/2017-21170836 and Proyecto Plurianual AIUE 1955 UBB.

\end{document}